\date{30 June 2020}
\DeclareFontFamily{OT1}{pzc}{}
\DeclareFontShape{OT1}{pzc}{m}{it}{<-> s * [1.10] pzcmi7t}{}
\DeclareMathAlphabet{\mathpzc}{OT1}{pzc}{m}{it}
\DeclareFontFamily{OT1}{rsfs}{}
 \DeclareFontShape{OT1}{rsfs}{n}{it}{<->rsfs10}{}
 \DeclareMathAlphabet{\curly}{OT1}{rsfs}{n}{it}
\theoremstyle{plain}  
\newtheorem{theorem}{Theorem}[section]
\newtheorem*{theorem*}{Theorem}
\newtheorem{proposition}[theorem]{Proposition}
\newtheorem{definition}[theorem]{Definition}
\theoremstyle{remark}
\newtheorem{remark}[theorem]{Remark}
\newtheorem*{claim*}{Claim}
\numberwithin{equation}{section}
\renewcommand{\leq}{\leqslant}
\renewcommand{\geq}{\geqslant}
\renewcommand{\setminus}{\smallsetminus}
\newcommand{\R}{\mathbb{R}}
\newcommand{\Z}{\mathbb{Z}}
\newcommand{\C}{\mathbb{C}}
\newcommand{\AAA}{\curly{A}}
\newcommand{\CCC}{\curly{C}}
\newcommand{\LLL}{\curly{L}}
\newcommand{\PPP}{\curly{P}}
\newcommand{\QQQ}{\curly{Q}}
\newcommand{\SSS}{\curly{S}}
\newcommand{\VV}{\mathbb{V}}
\newcommand{\cM}{\mathcal{M}}
\newcommand{\calR}{\mathcal{R}}
\newcommand{\dbar}{\bar{\partial}}
\newcommand{\SU}{\mathrm{SU}}
\newcommand{\GL}{\mathrm{GL}}
\newcommand{\SL}{\mathrm{SL}}
\DeclareMathOperator{\ad}{ad}
\DeclareMathOperator{\Ad}{Ad}
\DeclareMathOperator{\Hom}{Hom}
\DeclareMathOperator{\End}{End}
\DeclareMathOperator{\Id}{Id}
\DeclareMathOperator{\Aut}{Aut}
\DeclareMathOperator{\aut}{aut}
\DeclareMathOperator{\Out}{Out}
\DeclareMathOperator{\Diff}{Diff}
\DeclareMathOperator{\Mod}{Mod}
\DeclareMathOperator{\Conj}{Conj}
\renewcommand{\phi}{\varphi}
\newcommand{\liem}{\mathfrak{m}}
\newcommand{\liep}{\mathfrak{p}}
\newcommand{\liemc}{\mathfrak{m}^{\mathbb{C}}}
\newcommand{\lieh}{\mathfrak{h}}
\newcommand{\lieg}{\mathfrak{g}}
\newcommand{\liel}{\mathfrak{l}}
\newcommand{\liez}{\mathfrak{z}}
\newcommand{\liet}{\mathfrak{t}}
\renewcommand{\phi}{\varphi}
\DeclareMathOperator{\id}{id}
\begin{document}

\title[Action of the mapping class group on character varieties and
  Higgs bundles]{Action of the mapping class group on \\character varieties and
Higgs bundles}

\author[Oscar Garc{\'\i}a-Prada]{Oscar Garc{\'\i}a-Prada}
\address{Instituto de Ciencias Matem\'aticas \\
  CSIC-UAM-UC3M-UCM \\ Nicol\'as Cabrera, 13--15 \\ 28049 Madrid \\ Spain}
\email{oscar.garcia-prada@icmat.es}

\author[Graeme Wilkin]{Graeme Wilkin}
\address{Department of Mathematics\\
James College, Campus West\\
University of York\\
YO10 5DD\\
United Kingdom}
\email{graeme.wilkin@york.ac.uk}

\thanks{
The first author was partially supported by the Spanish MINECO under the ICMAT Severo Ochoa 
grant No.SEV-2015-0554, and grant No. MTM2013-43963-P and by the European
Commission Marie Curie IRSES  MODULI Programme PIRSES-GA-2013-612534.  The second author was partially supported by Singapore Ministry of Education Academic Research Fund
Tier 1 grant number R-146-000-200-112.
}

\subjclass[2000]{Primary 14H60; Secondary 57R57, 58D29}

\begin{abstract}
We consider the action of 
a finite subgroup of the 
mapping class group $\Mod(S)$  of 
an oriented compact 
surface $S$ of genus $g \geq 2$ on the moduli space $\calR(S,G)$ of 
representations of $\pi_1(S)$ in a connected semisimple real Lie group 
$G$. Kerckhoff's solution of the Nielsen realization problem ensures the
existence of an element $J$  in the Teichm\"uller 
space of $S$ for which $\Gamma$ can be realised as a subgroup of the 
group of automorphisms of $X=(S,J)$ which are holomorphic or antiholomorphic.
We identify the fixed points of the action of $\Gamma$ on $\calR(S,G)$
in terms of $G$-Higgs bundles on $X$
equipped with a certain twisted $\Gamma$-equivariant structure,
where the twisting involves abelian and non-abelian group cohomology
simultaneously. These, in turn, correspond to certain representations of the 
orbifold fundamental group.
When the kernel of the isotropy representation of the maximal compact subgroup
of $G$ is trivial, the fixed points can be described in terms of familiar objects
on  $Y=X/\Gamma^+$, where  $\Gamma^+ \subset \Gamma$ is the maximal subgroup of $\Gamma$
consisting of holomorphic automorphisms of $X$. If $\Gamma=\Gamma^+$
one obtains  actual $\Gamma$-equivariant
$G$-Higgs bundles on $X$, which in turn correspond with 
parabolic Higgs bundles on $Y=X/\Gamma$ (this generalizes work of
Nasatyr \& Steer for $G=\SL(2,\R)$ and Boden, Andersen \& Grove and 
Furuta \& Steer for $G=\SU(n)$). If on the other hand  $\Gamma$ has antiholomorphic
automorphisms, the objects on  $Y=X/\Gamma^+$ correspond with pseudoreal parabolic Higgs
bundles. This is a generalization  in the parabolic setup of the  pseudoreal Higgs bundles 
studied by the first author in collaboration with Biswas  \& Hurtubise.

\end{abstract}

\maketitle

\section{Introduction}
Let $S$ be a compact oriented surface of genus greater than one, and $G$ be
a real connected semisimple Lie group. Consider the moduli space of representations or
character variety $\calR(S,G)$ defined as the space of reductive 
representations of the fundamental group of $S$ in $G$ modulo conjugation 
by elements of $G$.  These are very important varieties that play a central 
role in geometry, topology, higher Teichm\"uller theory and theoretical physics
(see \cite{garcia-prada} for a survey).
A fundamental problem is that of understanding the action of the 
mapping class group or modular group of the surface $\Mod(S)$ in 
$\calR(S,G)$. In this paper, we consider the action of a finite subgroup
$\Gamma$ of $\Mod(S)$ and give a description of the fixed-point 
subvariety.  

A crucial step in our study is provided by a theorem of Kerckhoff
solving the  Nielsen realization problem 
\cite{kerckhoff}. This theorem proves the existence of a complex structure
$J$ on $S$, such that, if $X:=(S,J)$ is the corresponding Riemann surface,
$\Gamma$ is a subgroup of the group of  automorphisms of $X$ which are 
holomorphic or antiholomorphic. We can then use holomorphic methods, and 
in particular the theory of $G$-Higgs bundles over $X$. To define a 
$G$-Higgs bundle, we consider  a maximal compact  subgroup  $H\subset G$, 
and a Cartan decomposition $\lieg=\lieh\oplus \liem$. 
A $G$-Higgs bundle is a pair $(E,\varphi)$ consisting of a $H^\C$-bundle $E$,
where $H^\C$ is the complexification of $H$, and a holomorphic section
$\varphi$ of $E(\liem^\C)\otimes K$, where $E(\liem^\C)$ is the bundle 
associated to the complexification of the isotropy representation of $H$ in 
$\liem$, and
$K$ is the canonical line bundle of $X$.
  The non-abelian Hodge  correspondence establishes a
homeomorphism between $\calR(S,G)$ and the moduli space of polystable
$G$-bundles over $X=(S,J)$ for any complex structure $J$ on $S$. Now, if 
$J$ is the complex structure given by  Kerckhoff's theorem, using the 
 non-abelian Hodge  correspondence one can show that the action 
of an element of $\gamma\in \Gamma$ on $\calR(S,G)$ coincides with the 
natural action of  $\gamma$ on $\cM(X,G)$ via pull-back, if $\gamma$ is 
holomorphic, or the combination
of this with the conjugation defined by the reduction of the $H^\C$-bundle 
to $H$  defined by the solution to
the Hitchin equations, if $\gamma$ is antiholomorphic.
Our problem becomes
then that of analysing the fixed points $\cM(X,G)^\Gamma$ for this action.

The fixed-point subvariety  $\cM(X,G)^\Gamma$ is described in terms of 
$G$-Higgs bundles equipped with a certain twisted $\Gamma$-equivariant 
structure, where the twisting involves a compact conjugation $\tau$ of $H^\C$
and  a group $2$-cocycle $c\in Z^2_\tau(\Gamma,Z')$, where $Z'$ is a 
$\tau$-invariant subgroup of 
the centre of  $H^\C$ and $\gamma\in \Gamma$ acts on $z\in Z'$ trivially if
$\gamma$ is holomorphic and by $\tau(z)$ if $\gamma$ is antiholomorphic. 
We refer to this as a $(\Gamma,\tau,c)$-equivariant structure.
These twisted $\Gamma$-equivariant structures generalise at the same time 
genuine $\Gamma$-equivariant structures when $\Gamma$ consists entirely of 
holomorphic automorphisms of $X$, as well as twisted real structures (referred also
as pseudoreal structures in the literature) when 
$\Gamma$ is the group generated by an antiholomorphic involution of $X$ 
(see \cite{biswas-garcia-prada,biswas-garcia-prada-hurtubise,
biswas-garcia-prada-hurtubise2}). 
When $Z'$ is contained in the kernel of 
the isotropy 
representation 
and  $\Gamma$ is a subgroup of the group of holomorphic automorphisms of $X$, 
these are lifts of true $\Gamma$-equivariant structures on the associated
$G/Z'$-Higgs bundles. 

Assuming that $\Gamma$ is not a group generated by an antiholomorphic involution
of $X$ (as mentioned above, this case is treated in 
\cite{biswas-garcia-prada,biswas-garcia-prada-hurtubise,
biswas-garcia-prada-hurtubise2}), 
it is well-known that there is only a finite number of
points $x\in X$ for which the isotropy subgroups 
$\Gamma_x\subset \Gamma$ for the action of $\Gamma$ on $X$ are different
from the trivial subgroup $\{1\}$, and $\Gamma_x^+$, the subgroup of
$\Gamma_x$ consisting of holomorphic automorphisms, is a cyclic
group. At such points, a $(\Gamma,\tau,c)$-equivariant structure
defines an element $\sigma_x$ in the $c_x$-twisted character variety
of $\Gamma_x$ in $H^\C$, where $c_x\in Z^2(\Gamma_x, Z')$
is the restriction of $c$ to $\Gamma_x$. Here 
$\gamma\in\Gamma_x$ acts trivially on $H^\C$ if $\gamma$ is holomorphic and
by $\tau$ if $\gamma$ is antiholomorphic.
Fixing the cocycle $c$ and the elements $\sigma_x$ 
at the  points with  $\Gamma_x\neq \{1\}$, we define the moduli space of
$(\Gamma,\tau,c)$-equivariant $G$-Higgs bundles  with fixed $\sigma_x$.
Our main result is Theorem \ref{fixed-points-theorem}, where we  show 
that the moduli spaces of  $(\Gamma,\tau,c)$-equivariant $G$-Higgs bundles are
in the fixed-point locus $\cM(X,G)^\Gamma$, and more over, a smooth point in  
$\cM(X,G)^\Gamma$ corresponds to a  point in a moduli space of 
$(\Gamma,\tau,c)$-equivariant $G$-Higgs bundles for some $2$-cocycle $c$. In fact it is only the cohomology class of $c$ which is relevant in the parametrization of fixed points. Using  Theorem \ref{fixed-points-theorem} and a twisted equivariant version of the non-abelian Hodge correspondence (Theorem \ref{equivariant-nahc}), we describe  in
Theorem \ref{fixed-points-theorem-rep} the fix-point locus $\calR(S,G)^\Gamma$ in terms of representations of the orbifold fundamental group for the action of 
$\Gamma$ on $S$.

When $\Gamma$ consists entirely of holomorphic automorphisms of $X$, 
generalising a 
well-known result for vector bundles 
\cite{mehta-seshadri,furuta-steer,nasatyr-steer,biswas,andersen-masbaum,
andersen-grove}, and principal bundles \cite{teleman-woodward,balaji-seshadri},
we establish in Theorem \ref{equivariant-parabolic} a correspondence 
between $\Gamma$-equivariant (that is, without twisting) $G$-Higgs 
bundles over $X$ and parabolic $G$-Higgs bundles over $Y:=X/\Gamma$. The 
weights of the parabolic structure are determined by  the elements 
$\sigma_x$ defined by the equivariant structure, which in this case are simply
elements in the character variety $\Hom(\Gamma_x,H^\C)/H^\C$ of $\Gamma_x$.
In particular, if 
$Z'$ is contained in the kernel of the isotropy representation there is a
map from the moduli space of $G$-Higgs bundles over $X$ to the moduli space
of $G/Z'$-Higgs bundles and hence a map from the moduli space
of $(\Gamma,c)$-equivariant $G$-Higgs bundles over $X$ 
(here there is no twisting by $\tau$) to the 
moduli space of parabolic $G/Z'$-Higgs bundles over $Y$. 
In this situation, using the non-abelian 
Hodge correspondence between parabolic $G$-Higgs bundles  and 
representations of the fundamental group of a punctured surface, proved 
in \cite{biquard-garcia-prada-mundet},  we  relate 
in Theorem \ref{equiv-rep-punctures}
the representations of 
the orbifold fundamental group for the action of 
$\Gamma$ on $S$ to the representations of the fundamental group of 
$S/ \Gamma$ with punctures at the points corresponding to the elements of $S$ with non-trivial isotropy subgroup.

If we allow $\Gamma$ to contain antiholomorphic automorphisms, 
and $\Gamma^+$ is 
the subgroup of $\Gamma$ consisting of holomorphic automorphisms, we consider
the Riemann surface $Y:=X/\Gamma^+$. On this surface there is a residual 
antiholomorphic action of $\Z/2\cong \Gamma/\Gamma^+$. Now, if the restriction 
of $c$ to $\Gamma^+$ is trivial,  $(\Gamma,\tau,c)$-equivariant 
$G$-Higgs bundles on $X$ are in correspondence with a pseudoreal  parabolic 
$G$-Higgs bundles on $Y$ as described in \cite{calvo-garcia-prada-perez}. 
This is a generalization in the parabolic set-up of the notion of  pseudoreal Higgs bundle
studied in \cite{biswas-garcia-prada-hurtubise,biswas-garcia-prada,biswas-calvo-garcia-prada}. 
Again, using the non-abelian Hodge correspondence in \cite{biquard-garcia-prada-mundet}, we  relate 
in Theorem \ref{equiv-rep-punctures-pseudo}
the representations of 
the orbifold fundamental group for the action of 
$\Gamma$ on $S$ to the representations of the $\Z/2$-orbifold fundamental 
group of 
$S/\Gamma^+$ with punctures at the points corresponding to the elements of $S$ with non-trivial isotropy subgroup.

The more general $(\Gamma,\tau,c)$-equivariant objects on $X$, correspond to twisted 
parabolic objects  on $Y:=X/\Gamma^+$ in a more involved way, and  will be treated in
a separate paper.

In the process of writing up this paper, we came across 
the recent related work \cite{schaffhauser,wu,heller-schaposnik}. 

\noindent{\bf Acknowledgements}. We wish to thank Steve Kerckhoff, Jochen
Heinloth and Peter Gothen for useful discussions, and   NUS (Singapore),  
CMI (Chennai), 
Bernoulli Center (Lausanne) and ICMAT (Madrid) for hospitality and support. 
We also want to thank the referee for the comments and suggestions.
\section{Moduli space of representations and the mapping class group}
In this section $S$ is an oriented smooth compact surface of genus $g\geq 2$.

\subsection{Moduli space of representations}

Let $G$ be  a connected real 
reductive Lie group. By a {\bf representation} of $\pi_1(S)$ in
$G$ we mean a homomorphism $\rho\colon \pi_1(S) \to G$.
The set of all such homomorphisms, denoted
$\Hom(\pi_1(S),G)$,  is an analytic  variety, which is algebraic
if $G$ is algebraic.
The group $G$ acts on $\Hom(\pi_1(S),G)$ by conjugation:
$$
(g \cdot \rho)(\gamma) = g \rho(\gamma) g^{-1}
$$
for $g \in G$, $\rho \in \Hom(\pi_1(S),G)$ and
$\gamma\in \pi_1(S)$. If we restrict the action to the subspace
$\Hom^+(\pi_1(S),g)$ consisting of reductive representations,
the orbit space is Hausdorff.  By a {\bf reductive representation} we mean
one that, composed with the adjoint representation in the Lie algebra
of $G$, decomposes as a sum of irreducible representations.
If $G$ is algebraic this is equivalent to the Zariski closure of the
image of $\pi_1(S)$ in $G$ being a reductive group.
(When $G$ is compact every representation is reductive).  The
{\bf moduli space of representations} or {\bf character variety}
of $\pi_1(S)$ in $G$
is defined to be the orbit space
$$
\calR(S,G) = \Hom^{+}(\pi_1(S),G) / G. 
$$

It has the structure of an  analytic variety (see e.g.~\cite{goldman}) 
which is algebraic if $G$ is algebraic (see e.g.~ \cite{richardson}) and is real if $G$ is real or  
complex  if $G$ is complex. If $G$ is complex then $\calR(S,G)$ can also be expressed as the GIT quotient 
$$
\calR(S,G) = \Hom(\pi_1(S),G)\sslash  G .
$$

Let  $\rho:\pi_1(S)\to G$ be a representation of $\pi_1(S)$ in
$G$. Let $Z_G(\rho)$ be the centralizer in $G$ of
$\rho(\pi_1(S))$. We say that  $\rho$ is {\bf  irreducible} if and
only if it is reductive and $Z_G(\rho)=Z(G)$, where $Z(G)$ is the
centre  of $G$. 

\subsection{The mapping class group}

The {\bf mapping class group} or {\bf modular group} of $S$ is defined
as 
$$
\Mod(S)=\pi_0\Diff(S),
$$
where $\Diff(S)$ is the group of diffeomorphisms of $S$. 
We also consider the subgroup
$$
\Mod^+(S)=\pi_0\Diff^+(S),
$$
where 
$\Diff^+(S)$ is the subgroup of $\Diff(S)$ consisting of orientation-preserving
diffeomorphisms. We have an exact sequence

\begin{equation}\label{modular-extension}
1\to \Mod^+(S)\to \Mod(S) \to \Z/2\to 1.
\end{equation}

By the Dehn--Nielsen--Baer theorem, $\Mod(S)$ is isomorphic to 
$\Out(\pi_1(S))$, the group of  outer automorphisms of $\pi_1(S)$, 
and hence acts in the obvious way on $\calR(S,G)$.

Let $\Gamma\subset \Mod(S)$ be a finite subgroup. 
The main goal 
of this paper is to investigate the fixed points $\calR(S,G)^\Gamma$. 
A crucial step to do this
is provided by Kerckhoff's solution of the {\bf Nielsen realization problem} 
\cite{kerckhoff}. To explain this, let $J$ be an element in the Teichm\"uller space of $S$ and
$X=(S,J)$ be the corresponding Riemann surface. Denote by $\Aut(X)$ the group consisting of automorphisms
of $S$ which are holomorphic or antiholomorphic with respect to $J$. If  $\Aut^+(X)$  is the subgroup of $\Aut(X)$ 
consisting of holomorphic automorphisms of $X$, there is an exact sequence
\begin{equation}\label{modular-extension}
1\to \Aut^+(X)\to \Aut(X) \to \Z/2\to 1.
\end{equation}

\begin{theorem}\label{kerckhoff}
Let $\Gamma\subset \Mod(S)$ be a finite subgroup.
There exists an element $J$ in the Teichm\"uller space of $S$ such that $\Gamma\subset \Aut(X)$, where 
$X=(S,J)$.  In particular,
if $\Gamma\subset  \Mod^+(S)$, one has $\Gamma\subset \Aut^+(X)$. Moreover,
if $X$ is not hyperelliptic, $\Gamma=\Aut^+(X)$
if $\Gamma\subset \Mod^+(S)$, and $\Gamma=\Aut(X)$
if $\Gamma$ is not contained in $\Mod^+(S)$ and  $\Gamma^+\neq \{1\}$.
\end{theorem}

\begin{remark} This had been proved by Nielsen \cite{nielsen} for 
cyclic groups and
by Fenchel \cite{fenchel} for solvable groups.
Thanks to Theorem \ref{kerckhoff} the problem of studying the 
action of $\Gamma$ on $\calR(S,G)$ can be reduced to  studying
the action of $\Gamma$ on the moduli space of $G$-Higgs bundles on $X$.
\end{remark}

\subsection{Moduli space of $G$-Higgs bundles}\label{higgs-bundles}

Here $X$ is a compact  Riemann surface
and  $G$ is a connected real reductive Lie group. We fix a maximal compact 
subgroup $H$ of $G$. 
The Lie algebra $\lieg$ of $G$ is equipped with an involution $\theta$ 
that gives the Cartan decomposition  $\lieg=\lieh + \liem$,
where $\lieh$ is the Lie algebra of $H$. We fix a  metric  $B$ in $\lieg$
with respect to  which the Cartan decomposition is orthogonal.
This metric  is positive definite on $\liem$ and negative definite on $\lieh$.  
We have $[\liem,\liem]\subset \lieh$, $[\liem,\lieh]\subset\liem$.
From the isotropy representation $H\to \Aut(\liem)$, we obtain the
representation $\iota: H^\C \to \Aut(\liem^\C)$.
When $G$ is semisimple we take $B$ to be the Killing form.
In this case $B$ and a choice of a maximal compact subgroup $H$ 
determine a Cartan decomposition (see \cite{knapp} for details).

{\bf A $G$-Higgs bundle} on  $X$ consists of a holomorphic principal
$H^\C$-bundle  $E$ together with a holomorphic section $\varphi\in
H^0(X,E(\liem^\C)\otimes K)$,
where  $E(\liem^\C)$ is the associated vector bundle with fibre 
$\liem^\C$ via the complexified isotropy representation, and $K$ is 
the canonical line bundle of $X$.

If $G$ is compact,  $H=G$ and $\liem=0$. A $G$-Higgs bundle is hence 
simply  a holomorphic principal  $G^\C$-bundle.
If  $G=H^\C$, where now $H$ is a compact Lie group,  $H$ is 
a maximal compact subgroup of $G$, and $\liem=i\lieh$. In this case, a 
$G$-Higgs 
bundle is a principal $H^\C$-bundle together with a section 
$\varphi\in H^0(X,E(\lieh^\C)\otimes K)=H^0(X, E(\lieg) \otimes K)$, where 
$E(\lieg)$ is the adjoint bundle. This is the original definition for 
complex Lie groups given by Hitchin in \cite{hitchin:duke}.

There is a  notion of stability for $G$-Higgs
bundles (see \cite{garcia-prada-gothen-mundet}). To explain 
this  we  consider the   parabolic subgroups of $H^\C$ 
defined for $s\in i\lieh$ as
\begin{equation}\label{parabolic}
P_s =\{g\in H^\C\ :\ e^{ts}ge^{-ts}\text{ is bounded as }t\to\infty\}.
\end{equation}
A Levi subgroup of $P_s$ is given by $L_s =\{g\in H^\C\ :\ \Ad(g)(s)=s\},$.
Their Lie algebras are given by
\begin{align*}
\mathfrak{p}_s &=\{Y\in\lieh^\C\ :\ \Ad(e^{ts})Y\text{ is
bounded as }t\to\infty\},\\
\mathfrak{l}_s &=\{Y\in\lieh^\C\ :\ \ad(Y)(s)=[Y,s]=0\}.
\end{align*}

We  consider the subspaces
\begin{align*}
&\liem_s=\{Y\in \liem^\C\ :\ \iota(e^{ts})Y
\text{ is bounded as}\;\; t\to\infty\}\\
&\liem^0_{s}=\{Y\in \liem^\C\ :\ \iota(e^{ts})Y=Y\;\;
\mbox{for every} \;\; t\}.
\end{align*}

One has that $\liem_s$ is invariant under the action of $P_{s}$ and $\liem^0_{s}$
is invariant under the action of $L_{s}$. 

An  element $s\in i\lieh$ defines a character $\chi_s$ of $\liep_s$ since
$\langle s,[\liep_s,\liep_s]\rangle=0$.  Conversely, by the isomorphism 
$\left( \liep_s/ [\liep_s,\liep_s]\right)^* \cong \liez_{L_s}^*$, 
where $\liez_{L_s}$ is the centre of the Levi subalgebra $\liel_s$, a 
character $\chi$ of $\liep_s$ is given by an element in $\liez_{L_s}^*$, 
which gives, via the invariant metric, an element of 
$s_\chi\in \liez_{L_s}\subset i\lieh$. When $\liep_s\subset \liep_{s_\chi}$, 
we say that $\chi$ is an antidominant character of $\liep$. 
When  $\liep_s=\liep_{s_\chi}$ we say that 
$\chi$ is a strictly antidominant character. Note that for $s\in i\lieh$, 
$\chi_s$ is a strictly antidominant character of $\liep_s$.

Let now $(E,\varphi)$ be a $G$-Higgs bundle over $X$, 
 and let $s\in i\lieh$. Let $P_s$ be defined as above.
For $\sigma\in \Gamma(E(H^\C/P_s))$ a reduction of the structure group 
of $E$ from $H^\C$ to $P_s$, we define the degree relative to $\sigma$ and
$s$, or equivalently to $\sigma$ and $\chi_s$ in terms of the 
curvature of connections using Chern--Weil theory.
 For this, define $H_s=H\cap L_s$ and
$\lieh_s=\lieh\cap\liel_s$. 
Then $H_s$ is a maximal compact subgroup of $L_s$, so the inclusions
$H_s\subset L_s$ is a homotopy equivalence. Since the inclusion
$L_s\subset P_s$ is also a homotopy equivalence, given a reduction
$\sigma$ of the structure group of $E$ to $P_s$ one can
further restrict the structure group of $E$ to $H_s$ in a unique
way up to homotopy. Denote by $E'_{\sigma}$ the resulting $H_s$
principal bundle.
Consider now a connection $A$
on $E'_{\sigma}$ and let
$F_A\in\Omega^2(X,E'_{\sigma}(\lieh_s)$ be  its
curvature. Then $\chi_s(F_A)$ is a $2$-form on $X$ with
values in $i\R$, and 
\begin{equation}\label{degree-chern-weil}
\deg(E)(\sigma,s):=\frac{i}{2\pi}\int_X \chi_s(F_A).
\end{equation}

We define the subalgebra $\lieh_{\ad}$  as follows. 
Consider the decomposition $ \lieh = \liez + [\lieh, \lieh] $, where $\liez$ 
is the centre of $\lieh$, and the isotropy representation 
$\ad= \ad:\lieh\to \End(\liem)$. Let $\liez'=\ker(\ad_{|\liez})$  and 
take $\liez''$ such that $\liez=\liez'+\liez''$. Define the subalgebra 
$\lieh_{\ad} := \liez'' + [\lieh, \lieh]$. The subindex $\ad$ denotes that 
we have taken away the part of the centre $\liez$ acting trivially via 
the isotropy representation $\ad$.

\begin{definition}
\label{def:L-twisted-pairs-stability} 
 We say that a $G$-Higgs bundle $(E,\varphi)$ is:

{\bf semistable} if for any $s\in i\lieh$ and any holomorphic reduction 
$\sigma\in\Gamma(E(H^\C/P_s))$ such that $\varphi\in
 H^0(X,E_{\sigma}(\liem_s)\otimes K)$, 
we have that $\deg(E)(\sigma,s)\geq 0$;

 {\bf stable} if for any $s\in i\lieh_{\ad}$ and any holomorphic
  reduction $\sigma\in\Gamma(E(H^\C/P_s))$ such that $\varphi\in
  H^0(X,E_{\sigma}(\liem_s)\otimes K)$, we have that
  $\deg(E)(\sigma,s)> 0$;

{\bf polystable} if it is semistable and for
any $s\in i\lieh_{\ad}$ and any holomorphic reduction 
$\sigma\in\Gamma(E(H^\C/P_s))$ 
such that $\varphi\in  H^0(X,E_{\sigma}(\liem_s)\otimes K)$ and  
$\deg(E)(\sigma,s)=0$, there is a holomorphic reduction 
of the structure group $\sigma_L\in\Gamma(E_{\sigma}(P_s/L_s))$ to a Levi
subgroup $L_s$ such that  $\varphi\in H^0(X,E_{\sigma_L}(\liem_s^0)\otimes K)
\subset H^0(X,E_{\sigma}(\liem_s)\otimes K)$.
\end{definition}

We define the {\bf moduli space of polystable $G$-Higgs bundles} 
$\cM(X,G)$  as the set of isomorphism classes of 
polystable  $G$-Higgs bundles on $X$. A GIT construction of this space has been given
by Schmitt \cite{schmitt:2008}. 

The notion of stability emerges from the study of the
Hitchin equations.  The equivalence between the existence of solutions to 
these equations and the polystability of Higgs bundles is 
given by the following
(see \cite{garcia-prada-gothen-mundet}).
\begin{theorem}\label{theo:hk-twisted-pairs}
Let $(E,\varphi)$ be a $G$-Higgs bundle over a Riemann surface $X$.
 Then $(E,\varphi)$ is polystable if and only if 
there exists a reduction $h$ of the structure group of $E$ from $H^\C$ to
$H$,   such that
\begin{equation}\label{eq:Hitchin-Kobayashi-h}
F_h - [\varphi,\tau_h(\varphi)]=0\\
\end{equation}
where $\tau_h:\Omega^{1,0}(E(\liem^\C))\to \Omega^{0,1}(E(\liem^\C))$ is the
combination of  the anti-holomorphic involution in $E(\liem^\C)$ defined 
by the compact real form at each point determined by $h$  and the 
conjugation of $1$-forms, and $F_h$ is the 
curvature of the unique $H$-connection compatible with the holomorphic 
structure of $E$. 
\end{theorem}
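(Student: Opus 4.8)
The plan is to prove the two implications separately: the direction ``existence of a solution $\Rightarrow$ polystability'' by a Chern--Weil computation, and the converse by the standard variational argument with a Donaldson-type functional, reducing to the linear case where convenient.

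\textbf{Easy direction.} Suppose $h$ solves \eqref{eq:Hitchin-Kobayashi-h}. Fix $s\in i\lieh$ and a holomorphic reduction $\sigma$ of $E$ to $P_s$ with $\varphi\in H^0(X,E_\sigma(\liem_s)\otimes K)$. Using $h$ together with $\sigma$ one obtains a reduction to $H_s=H\cap L_s$, and by the Chern--Weil formula \eqref{degree-chern-weil}, $\deg(E)(\sigma,s)=\frac{i}{2\pi}\int_X\chi_s(F_h)$, where $\chi_s(F_h)$ is read off from the $L_s$-component of the curvature. Substituting the Hitchin equation gives $\chi_s(F_h)=\chi_s([\varphi,\tau_h(\varphi)])$; since $\varphi$ takes values in $\liem_s$, the pointwise quantity $-i\langle s,[\varphi,\tau_h(\varphi)]\rangle$ equals the squared norm of the positive-weight part of $\varphi$ with respect to $s$, hence is a non-negative multiple of the volume form (this is the positivity underlying the moment-map picture for the isotropy representation). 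Therefore $\deg(E)(\sigma,s)\geq 0$, which is semistability. If moreover $s\in i\lieh_{\ad}$ and $\deg(E)(\sigma,s)=0$, the integrand vanishes identically, forcing $\varphi$ to lie in $\liem^0_s$ and the underlying connection to reduce, which produces the holomorphic reduction $\sigma_L$ to the Levi $L_s$ demanded by the definition of polystability.

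\textbf{Hard direction.} Fix a $C^\infty$ reduction $h_0$ of $E$ to $H$ and parametrise nearby reductions as $h=h_0 e^{u}$ with $u\in\Omega^0(X,E(i\lieh))$. Define the $G$-analogue $M_{h_0}(u)$ of Donaldson's functional, whose first variation is (up to a constant) $\int_X\langle v,\Lambda\bigl(F_{h_0 e^{u}}-[\varphi,\tau_{h_0 e^{u}}(\varphi)]\bigr)\rangle$, so that critical points are exactly solutions of \eqref{eq:Hitchin-Kobayashi-h}. The scheme is: (i) $M_{h_0}$ is convex along the geodesic rays $u_t=tu$ in the symmetric space of metrics, and strictly convex transverse to the directions fixed by holomorphic automorphisms commuting with $\varphi$; (ii) polystability yields a coercivity estimate of the form $M_{h_0}(u)\geq C_1\|u\|_{L^1}-C_2$ modulo those directions; (iii) the Uhlenbeck--Yau/Simpson continuity method (or a Palais--Smale argument) then produces a smooth minimiser, which is the desired $h$; (iv) when $(E,\varphi)$ is polystable but not stable, one runs the argument on the stable factors of the Levi reduction and reassembles. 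Alternatively, and more quickly, choose a faithful linear representation $H^\C\hookrightarrow\GL(n,\C)$ compatible with the Cartan involution and with $\iota$, verify that $G$-polystability of $(E,\varphi)$ is equivalent to polystability of the associated Higgs vector bundle, invoke the Hitchin--Simpson theorem, and use uniqueness of the Hermitian--Yang--Mills metric to show it reduces to $H$; this is the route of \cite{garcia-prada-gothen-mundet}, to which one may simply appeal.

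\textbf{Main obstacle.} The crux is step (ii): turning the polystability inequalities, which are indexed by the whole family of parabolic reductions $(s,\sigma)$ adapted to $\varphi$, into a single analytic coercivity bound for $M_{h_0}$. This needs the compactness and rationality properties of the relevant set of weights $s$, careful treatment of the subalgebra $\lieh_{\ad}$ so that the central directions $\liez'$ acting trivially on $\liem$ are correctly excluded from the test objects, and a limiting argument showing that a minimising sequence along which $M_{h_0}$ fails to be coercive produces a genuine destabilising reduction. The remaining ingredients --- the Chern--Weil direction, convexity of $M_{h_0}$, and elliptic regularity of the minimiser --- are by now standard and proceed as in the $\SL(n,\C)$ and general real reductive cases.
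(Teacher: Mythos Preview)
The paper does not actually give a proof of this theorem: it is stated as background (the ``Hitchin--Kobayashi correspondence'') and attributed to \cite{garcia-prada-gothen-mundet}, with no argument supplied in the paper itself. So there is no proof in the paper to compare your proposal against.

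That said, your sketch is a faithful outline of the standard argument as carried out in \cite{garcia-prada-gothen-mundet}: the Chern--Weil/moment-map inequality for the ``easy'' direction, and the Donaldson functional plus coercivity-from-polystability for the ``hard'' one, with the option of reducing to the linear Hitchin--Simpson theorem via a faithful representation. In that sense your proposal is \emph{more} than what the paper does --- the paper simply invokes the result --- and the approach you describe is precisely the one in the cited reference. If you were writing this up rather than sketching it, the place that needs the most care is exactly the one you flag: converting the family of algebraic destabilising tests into a single analytic coercivity estimate, and handling the central directions in $\lieh$ that act trivially via the isotropy representation (the role of $\lieh_{\ad}$).
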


 A $G$-Higgs bundle $(E,\varphi)$ is said to be {\bf simple} if 
 \begin{math}
 \mathrm{Aut}(E,\varphi)=Z(H^{\C})\cap \ker(\iota)
\end{math}
where $Z(H^{\C})$ the centre of $H^\C$.
A $G$-Higgs bundle $(E,\varphi)$ is said to be
{\bf infinitesimally simple} if the infinitesimal automorphism
space $\aut(E,\varphi)$ is isomorphic to
$ H^0(X,E(\ker d\iota \cap Z(\lieh^{\C}))$ where  $Z(\lieh^{\C})$ denotes the Lie algebra of $Z(H^{\C})$.

Thus a $G$-Higgs bundle is (infinitesimally) simple if its
(infinitesimal) automorphism group is
as small as possible.
It is clear that a simple $G$-Higgs bundle is
  infinitesimally simple.  If $G$ is complex then $\iota$ is the
  adjoint representation and $(E,\varphi)$ is simple
  (resp.\ infinitesimally simple) if $\Aut(E,\varphi)=Z(G)$
  (resp.\ $\aut(E,\varphi)=Z(\lieg)$).

The basic link between  representations  of $\pi_1(S)$  and 
Higgs bundles is given by the {\bf non-abelian Hodge correspondence} due to Hitchin, 
Donaldson, Simpson, Corlette and others (see \cite{garcia-prada-gothen-mundet} 
and references there).
\begin{theorem}\label{na-Hodge}
Let $S$ be a compact surface and $X=(S,J)$ be the Riemann surface defined
by any complex structure $J$ on $S$.
Let $G$ be a real connected semisimple Lie group. There is a homeomorphism
$\calR(S,G) \stackrel{\cong}{\longrightarrow} \cM(X,G)$, where the image of 
the irreducible representations is the subspace of stable and simple $G$-Higgs 
bundles.
\end{theorem}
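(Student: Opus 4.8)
The plan is to assemble the homeomorphism from the two halves of the non-abelian Hodge correspondence, each of which is a theorem in the literature, and then to check compatibility on the locus of irreducible representations. First I would recall the Corlette--Donaldson side: given a reductive representation $\rho\colon\pi_1(S)\to G$, the associated flat $G$-bundle $E_\rho\to X$ carries a harmonic metric (a $\rho$-equivariant harmonic map from the universal cover $\widetilde X$ to $G/H$), by the theorem of Corlette (and Donaldson in the rank-one case), the existence being guaranteed precisely by reductivity of $\rho$. The harmonic metric reduces the structure group to $H$ and splits the flat connection $D = d_A + \psi$ into an $H$-connection $d_A$ and a $1$-form $\psi\in\Omega^1(X,E(\liem))$; decomposing $\psi$ into $(1,0)$ and $(0,1)$ parts and setting $\varphi = \psi^{1,0}$, one checks that $\dbar_A\varphi = 0$ and $F_A - [\varphi,\tau_h(\varphi)] = 0$, i.e. $(E,\varphi)$ solves the Hitchin equations \eqref{eq:Hitchin-Kobayashi-h} and hence, by Theorem \ref{theo:hk-twisted-pairs}, defines a polystable $G$-Higgs bundle. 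Conversely, given a polystable $G$-Higgs bundle, Theorem \ref{theo:hk-twisted-pairs} furnishes a metric $h$ solving the same equations, and then $D = d_A + \varphi + \tau_h(\varphi)$ is a flat $G$-connection whose holonomy is a reductive representation. These two constructions are mutually inverse up to the natural equivalence (isomorphism of Higgs bundles $\leftrightarrow$ conjugacy of representations), which gives the set-theoretic bijection $\calR(S,G)\cong\cM(X,G)$.

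Next I would address the topology. Both sides carry natural topologies: $\calR(S,G)$ as an analytic variety (quotient topology from $\Hom^+(\pi_1(S),G)$), and $\cM(X,G)$ via its gauge-theoretic description as a quotient of the space of solutions to the Hitchin equations modulo $H$-gauge transformations, or equivalently as a quotient of the space of polystable Higgs bundles. Continuity in both directions follows from the continuous dependence of the solution to the Hitchin equations on the initial data: the harmonic metric depends continuously on $\rho$ (by uniqueness of the harmonic map up to the centralizer and the openness/properness of the relevant energy functional), and conversely the flat connection depends continuously on the Higgs bundle and its solution metric. One packages this as: both moduli spaces are homeomorphic to the single ``Hitchin moduli space'' $\cM_{\mathrm{Hit}}$ of gauge-equivalence classes of solutions to \eqref{eq:Hitchin-Kobayashi-h}, the map to $\calR(S,G)$ being ``take holonomy of $d_A+\varphi+\tau_h(\varphi)$'' and the map to $\cM(X,G)$ being ``take the holomorphic data $(\dbar_A,\varphi)$''; each is a continuous bijection between the appropriate Hausdorff spaces, and one invokes properness (or the fact that both spaces are built as quotients of the same underlying configuration space by the same group) to conclude each is a homeomorphism.

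Finally I would match up the distinguished sub-loci. On the representation side, $\rho$ is irreducible iff $Z_G(\rho)=Z(G)$; on the Higgs side one wants this to correspond to $(E,\varphi)$ being stable and simple. Here the key observation is that under the correspondence the centralizer $Z_G(\rho)$ is isomorphic to the automorphism group $\Aut(E,\varphi)$ (both compute the stabilizer of the point in the common Hitchin picture, namely the $H$-gauge transformations commuting with $d_A$ and $\varphi$), and reductive-plus-$Z_G(\rho)=Z(G)$ translates into polystable-plus-$\Aut(E,\varphi)=Z(H^\C)\cap\ker(\iota)$, which is precisely simplicity; a standard argument (a polystable Higgs bundle that is not stable admits a reduction to a proper parabolic with vanishing relative degree, producing extra automorphisms, hence fails simplicity) then upgrades ``polystable and simple'' to ``stable and simple.'' The main obstacle in writing this out carefully is the topological statement --- establishing that the continuous bijection is a homeomorphism, which in the noncompact-$G$ setting requires a properness argument for the gauge-theoretic moduli problem (controlling sequences of solutions, e.g. via a priori $C^0$ bounds on the Higgs field coming from the equations) rather than any formal nonsense; everything else is a careful bookkeeping of known constructions. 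Since all of this is established in \cite{garcia-prada-gothen-mundet} and the references therein, I would in the end simply cite that source for the full proof and record the statement as a tool to be used in the sequel.
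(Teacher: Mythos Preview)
Your sketch is a correct outline of the standard proof of the non-abelian Hodge correspondence, but you should note that the paper itself does not prove Theorem~\ref{na-Hodge} at all: it is stated as a known result with the parenthetical ``see \cite{garcia-prada-gothen-mundet} and references there'' and no further argument. Your final sentence --- that you would ``simply cite that source for the full proof and record the statement as a tool'' --- is therefore exactly what the paper does, and the preceding three paragraphs of your proposal, while accurate, go well beyond anything the authors supply. In short: your approach and the paper's coincide at the level of what is actually written (a citation), and your additional sketch is a bonus rather than a divergence.
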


A key step to go from a polystable $G$-Higgs bundle $(E,\varphi)$ over $X$ 
to a representation  $\rho$ of $\pi_1(S)$ in $G$ is given by the relation

\begin{equation}\label{naht-nabla}
\nabla=\dbar_E-\tau_h(\dbar_E)+\varphi -\tau_h(\varphi),
\end{equation}
where $\nabla$ is the flat connection corresponding to $\rho$, $\dbar_E$ is 
the Dolbeault operator of $E$ and $\tau_h$ is provided by the solution to the
Hitchin equations in Theorem \ref{theo:hk-twisted-pairs}. The converse 
construction is provided by the Donaldson--Corlette theorem on the existence of harmonic
metrics on a reductive flat bundle given in  \cite{donaldson,corlette}.

\begin{remark}
Theorem \ref{na-Hodge} can also be extended to (non-connected) reductive groups.
The presence of a continuous centre in $G$ requires replacing the fundamental group of $S$ by its universal central extension.
\end{remark}


From Theorems \ref{kerckhoff} and \ref{na-Hodge} we conclude the following.

\begin{proposition}\label{action-correspondence}
Let $\Gamma\subset \Mod(S)$ be a finite subgroup and 
$\Gamma^+=\Gamma \cap \Mod^+(S)$. Let $J$ be a 
complex structure given by Kerckhoff's theorem and $X=(S,J)$ be the 
corresponding Riemann surface.  Under the non-abelian Hodge 
correspondence $\calR(S,G)\cong \cM(X,G)$ given by Theorem \ref{na-Hodge}, 
the action of $ \Gamma$ on $\calR(S,G)$
coincides with the following  action of $\Gamma$ on  $\cM(X,G)$:

$$
\gamma\cdot(E,\varphi)= \begin{cases}
(\gamma^*E,\gamma^*\varphi) & \mbox{if} \;\; \gamma\in \Gamma^+\\
(\gamma^*\tau_h(E),\gamma^*\tau_h(\varphi)) & \mbox{if} \;\; \gamma\notin \Gamma^+
\end{cases}
$$

\noindent
where $\tau_h$ is  given by Theorem \ref{theo:hk-twisted-pairs},
$\tau_h(E):=E\times_{\tau_h}(H^\C)$ and $\tau_h(\varphi)$ is as in Theorem
\ref{theo:hk-twisted-pairs}.
 We thus have that for this action  $\calR(S,G)^\Gamma$ and 
$\cM(X,G)^\Gamma$ are in bijective correspondence.
\end{proposition}
\begin{proof}
Given any $\gamma \in \Gamma \subset \Mod(S)$, Kerckhoff's theorem 
\cite[Thm. 5]{kerckhoff} 
guarantees a unique diffeomorphism $f$ in the isotopy class of $\gamma$ such 
that $f^* J = J$ if $\gamma\in \Gamma^+$ or  $f^* J = -J$ if $\gamma\notin
\Gamma^+$. The action of $\gamma$ on $\calR(S, G)$ is defined by 
$\gamma \cdot [\rho] = [f^* \rho] = [\rho \circ f_*]$, which induces an action 
on the space of equivalence classes of flat connections given by 
$\gamma \cdot [\nabla] = [f^* \nabla]$ if $\gamma\in \Gamma^+$  or
$\gamma \cdot [\nabla] = [-f^* \nabla]$ if $\gamma\notin \Gamma^+$.
To find the induced action of $\gamma$ on $\cM(X, G)$ via Theorem 
\ref{na-Hodge} (which is well-defined since $f^* J = \pm J$) 
we recall that the flat connection $\nabla$ associated
to a polystable $G$-Higgs bundle $(E,\varphi)$ is given by (\ref{naht-nabla}),
and observe  that $\tau_h(\dbar_E)$  is the Dolbeault operator of $\tau_h(E)$.
Thus proving the statement. 
\end{proof}


\section{Twisted equivariant structures on principal bundles and 
associated vector bundles}

In this section $X$ is a compact Riemann surface of genus bigger than one,
$\Gamma\subset \Aut(X)$,
$G$  is a connected {\bf complex} reductive Lie group,
and $\tau$ is a conjugation of $G$ (not necessarily the compact conjugation).
We will write $\Gamma=\Gamma^+\cup \Gamma^-$, where $\Gamma^+$ is
the subgroup of $\Gamma$ consisting of holomorphic automorphisms and
$\Gamma^-$ is the coset  consisting of antiholomorphic automorphisms.

\subsection{Twisted equivariant structures on a principal bundle}
\label{twist-eq-bun}


Let $Z:=Z(G)$ be the centre of $G$. Consider the action of $\tau$ on $Z$  and 
let $Z'\subset Z$ be a subgroup invariant under this action.
Consider the action of $\Gamma$ on $Z'$ given by 
\begin{equation}\label{action-on-Z}
 z^\gamma= \begin{cases}
z&\mbox{if}\;\;  \gamma\in \Gamma^+\\
\tau(z)&\mbox{if}\;\; \gamma\in \Gamma^-.
\end{cases}
\end{equation}
Let   
$c\in Z^2_\tau(\Gamma,Z')$ be a $2$-{\bf cocycle} for this action.
This is a map $c:\Gamma\times \Gamma\to Z'$ satisfying the 
cocycle condition
$$
c(\gamma',\gamma'')^\gamma c(\gamma,\gamma'\gamma'')=
c(\gamma\gamma',\gamma'')c(\gamma,\gamma').
$$
  
These objects emerge in the study of ``lifts'' to $G$  of {\bf non-abelian
1-cocycles} in $Z^1(\Gamma,G/Z')$ for the action of $\Gamma$ on $G$
given by $ g^\gamma=g$ if $\gamma$ is holomorphic and  
$g^\gamma=\tau(g)$ if $\gamma$ is antiholomorphic. In particular,
if $\Gamma=\Gamma^+$,  the action of $\Gamma$ on $G$
is trivial and $Z^1(\Gamma,G/Z')=\Hom(\Gamma,G/Z')$, that is the 
1-cocycles are simply representations of $\Gamma$ in $G/Z'$. 

Let $E$ be a holomorphic $G$-bundle over $X$.  Let $c\in Z^2_\tau(\Gamma,Z')$.  
A {\bf $(\Gamma,\tau,c)$-equivariant structure on $E$} (or simply {\bf twisted
$\Gamma$-equivariant structure} if there is no need to specify $\tau$ and $c$) consists of a collection
of  maps $\widetilde{\gamma}:E\to E$ covering $\gamma:X\to X$ for 
every $\gamma\in \Gamma$, satisfying 

$$
\widetilde{\gamma}(eg)=\begin{cases} 
\widetilde{\gamma}(e)g  \;\;\mbox{and}\;\; \gamma\;\; \mbox{holomorphic}& \mbox{if}\;\; \gamma\in \Gamma^+\\
\widetilde{\gamma}(e)\tau(g)\;\; \mbox{and}\;\; \gamma\;\; \mbox{antiholomorphic}& \mbox{if} \;\; \gamma\in \Gamma^-,
\end{cases}
$$

$$
\widetilde{\gamma\gamma'}=c(\gamma,\gamma')\widetilde{\gamma}\widetilde{\gamma}',
$$
and $\widetilde{\Id_X}=\Id_E$. This imposes the condition $c(\gamma,1)=1$ for
every $\gamma\in \Gamma$.

When $c$ is the trivial cocycle $1$ we will refer to a 
$(\Gamma,\tau,1)$-equivariant structure as a $(\Gamma,\tau)$-equivariant structure or
a $\tau$-twisted $\Gamma$-equivariant structure. If $\Gamma=\Gamma^+$, we take 
$\tau$ to be the identity and we refer to a $(\Gamma,1,c)$-equivariant structure
as a $(\Gamma,c)$-equivariant structure. If, moreover $c=1$, then we obtain
a genuine $\Gamma$-equivariant structure on $E$.

Let $\Aut(E)$ be the group of holomorphic automorphisms of $E$ covering the 
identity of $X$, and let 
$\Aut_{\Gamma,\tau}(E)$ be the group of bijective maps $f:E\to E$ defined by 
\begin{equation}\label{twisted-autos}
f(eg)=\begin{cases}
f(e)g\;\;\mbox{and}\;\; f\;\; \mbox{holomorphic}&\mbox{if}\;\; f\;\;
\mbox{covers} \;\; \gamma\in \Gamma^+\\
f(e)\tau(g)\;\;\mbox{and}\;\; f\;\; \mbox{antiholomorphic}&\mbox{if}\;\; f\;\;\mbox{covers} \;\; \gamma\in \Gamma^-.
\end{cases}
\end{equation}

There is  an exact
sequence
\begin{equation}\label{exact-aut}
1\to \Aut(E)\to \Aut_{\Gamma,\tau}(E) \to \Gamma.
\end{equation}

A  $(\Gamma,\tau,c)$-equivariant structure on $E$ is simply
a twisted representation $\Gamma \to \Aut_{\Gamma,\tau}(E)$ with cocycle $c$, that is
a map $\sigma:\Gamma \to \Aut_{\Gamma,\tau}(E)$ such that
$$
\sigma(\gamma\gamma')=c(\gamma,\gamma')\sigma(\gamma)\sigma(\gamma').
$$
This is clear since, if  $E'$ is the $G/Z'$-principal bundle 
associated to $E$ via
the projection $G\to G/Z'$, a $(\Gamma,\tau,c)$-equivariant structure on 
$E$ defines a  $(\Gamma,\tau)$-equivariant structure on $E'$, and  we  
have an exact sequence
$$
1\to Z' \to \Aut_{\Gamma,\tau}(E) \to \Aut_{\Gamma,\tau}(E') \to  1.
$$

Two twisted
$\Gamma$-equivariant structures on $E$ for the same $\tau$ and for two 
cocycles $c$ and $c'$ define the same 
$(\Gamma,\tau)$-equivariant structure on $E'$ if and only if 
there exists a   function $f:G\to Z'$ such that the corresponding 
twisted representations
$\sigma$ and $\sigma'$ of $\Gamma$ in $\Aut_{\Gamma,\tau}(E)$ are related by 
$\sigma'=f\sigma$,
and 
\begin{equation}\label{coboundary}
c'(\gamma,\gamma')=f(\gamma\gamma')f(\gamma)^{-1}f(\gamma')^{-1}
c(\gamma,\gamma').
\end{equation}
This defines a natural equivalence relation in the set of 
$(\Gamma,\tau,c)$-equivariant structures on $E$, whose equivalence classes 
are parametrised by the cohomology group $H^2(\Gamma,Z')$.

\begin{remark}
Of course if $Z'=Z$, $G/Z'=\Ad(G)$ and $E'=P(E):=E/Z$.
\end{remark}

There is an alternative way of thinking of a $(\Gamma,\tau,c)$-equivariant 
structure as a $\tau$-twisted  equivariant structure on $E$ for the action of a 
larger  group. Namely, the $2$-cocycle $c$ defines an extension of groups
$$
1\to Z'\to \Gamma_c \to \Gamma \to 1.
$$

Two cocycles are cohomologous if and only if  the corresponding 
extensions are equivalent, i.e. equivalence classes of extensions of $\Gamma$ 
by  $Z'$ with the action of $\Gamma$ on $Z'$ given by \ref{action-on-Z} 
are parametrised  by  $H^2_\tau(\Gamma,Z')$. 

We have the following.

\begin{proposition}
$(\Gamma,\tau,c)$-equivariant structures on $E$ are in bijection with 
central $(\Gamma_ c,\tau)$-equivariant structures on $E$, where $\Gamma_c$ 
acts on $X$ and on $Z'$ via the projection  $\Gamma_c \to \Gamma$, and by central we mean that
the action of $Z'$ in the kernel of the extension above is the natural action 
of $Z'$ on $E$.
\end{proposition}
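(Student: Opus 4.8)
The plan is to realise the extension $\Gamma_c$ concretely and then transport structures back and forth along a set-theoretic section, so that the statement becomes the bundle-theoretic incarnation of the classical equivalence between projective representations with a fixed $2$-cocycle and honest representations of the associated central extension. Write $\Gamma_c = \Gamma \times Z'$ as a set, with multiplication $(\gamma,z)(\gamma',z') = (\gamma\gamma',\, zz'\,c(\gamma,\gamma')^{-1})$ (the sign convention under which a pseudorepresentation with cocycle $c$ becomes an honest representation of $\Gamma_c$); the cocycle identity stated above is exactly associativity, the normalisation $c(\gamma,1)=c(1,\gamma)=1$ — forced, as already noted, by $\widetilde{\Id}_X = \Id_E$ together with the cocycle identity — makes $(1,1)$ the unit, and $Z'\cong\{1\}\times Z'\subset\Gamma_c$ is the kernel of the projection $\Gamma_c\to\Gamma$, $(\gamma,z)\mapsto\gamma$, through which $\Gamma_c$ acts on $X$. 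Recall also, from the discussion above, that a $(\Gamma,c)$-pseudoequivariant structure on $E$ is the same as a pseudorepresentation $\Gamma\to\Aut_\Gamma(E)$ with cocycle $c$, whereas a central $\Gamma_c$-equivariant structure on $E$ is a homomorphism $\Phi\colon\Gamma_c\to\Aut_\Gamma(E)$ lifting $\Gamma_c\to\Gamma$ and with $\Phi|_{Z'}$ equal to the natural action of $Z'\subset Z(G)$ on $E$; so it suffices to match these two kinds of data.

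For the forward map I would start from a central $\Gamma_c$-equivariant structure $\Phi$ and set $\widetilde{\gamma}:=\Phi_{(\gamma,1)}$. Then $\widetilde{\gamma}$ covers $\gamma\colon X\to X$ since $(\gamma,1)\mapsto\gamma$, and $\widetilde{\Id}_X=\Phi_{(1,1)}=\Id_E$. Computing in $\Gamma_c$ gives $(\gamma,1)(\gamma',1)=(\gamma\gamma',c(\gamma,\gamma')^{-1})=(1,c(\gamma,\gamma')^{-1})(\gamma\gamma',1)$, so applying $\Phi$ and using centrality — $\Phi_{(1,z)}$ is the natural action of $z$, which commutes with every bundle automorphism because bundle maps are $G$-equivariant — yields $\widetilde{\gamma}\,\widetilde{\gamma'}=c(\gamma,\gamma')^{-1}\,\widetilde{\gamma\gamma'}$, i.e. $\widetilde{\gamma\gamma'}=c(\gamma,\gamma')\,\widetilde{\gamma}\,\widetilde{\gamma'}$, which is precisely the defining relation of a $(\Gamma,c)$-pseudoequivariant structure. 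Conversely, given a $(\Gamma,c)$-pseudoequivariant structure $\{\widetilde{\gamma}\}$, define $\Phi_{(\gamma,z)}:=z\cdot\widetilde{\gamma}$, the composite of $\widetilde{\gamma}$ with the natural action of $z$. Since the elements of $Z'$ act by $G$-translations they commute with every $\widetilde{\gamma'}$ and with each other, so $\Phi_{(\gamma,z)}\Phi_{(\gamma',z')}=zz'\,\widetilde{\gamma}\,\widetilde{\gamma'}=zz'\,c(\gamma,\gamma')^{-1}\,\widetilde{\gamma\gamma'}=\Phi_{(\gamma\gamma',\,zz'c(\gamma,\gamma')^{-1})}$, which is exactly the group law on $\Gamma_c$; moreover $\Phi$ lifts $\Gamma_c\to\Gamma$ by construction, $\Phi_{(1,1)}=\Id_E$, and $\Phi_{(1,z)}=z\cdot\Id_E$ is the natural $Z'$-action, so $\Phi$ is central.

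Finally I would check that the two assignments are mutually inverse: from $\Phi$ one builds $\widetilde{\gamma}=\Phi_{(\gamma,1)}$ and recovers $(\gamma,z)\mapsto z\cdot\Phi_{(\gamma,1)}=\Phi_{(1,z)}\Phi_{(\gamma,1)}=\Phi_{(1,z)(\gamma,1)}=\Phi_{(\gamma,z)}$, while from $\{\widetilde{\gamma}\}$ one builds $\Phi_{(\gamma,z)}=z\cdot\widetilde{\gamma}$ and recovers $\Phi_{(\gamma,1)}=1\cdot\widetilde{\gamma}=\widetilde{\gamma}$; both round trips are the identity. I would also record that no surjectivity of $\Aut_\Gamma(E)\to\Gamma$ is needed — a pseudoequivariant, respectively central $\Gamma_c$-equivariant, structure supplies the required lifts of the group elements as part of its data — and that it is precisely the fact that the $Z'$-action lands in the centre of $\Aut(E)$ that makes the homomorphism property and the pseudoequivariance relation correspond. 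The only point that needs genuine care is purely bookkeeping: pinning down the sign convention for $\Gamma_c$ (the twist in its group law versus the placement of $c$ in $\widetilde{\gamma\gamma'}=c(\gamma,\gamma')\widetilde{\gamma}\widetilde{\gamma'}$) so that the two relations match on the nose rather than up to replacing $c$ by $c^{-1}$; everything else is a direct unwinding of the definitions.
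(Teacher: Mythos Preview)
Your proof is correct and is essentially the same approach as the paper's: both identify the bijection as the standard dictionary between pseudorepresentations of $\Gamma$ with cocycle $c$ and honest representations of the central extension $\Gamma_c$. The paper simply cites this as a fact from group representation theory (with a reference to Rotman) and records the relevant commutative diagram, whereas you spell out the two mutually inverse maps explicitly; there is no substantive difference in strategy.
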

\begin{proof}

It follows from  group representation theory (see \cite{rotman} for example)
that 
a twisted representation $\Gamma\to \Aut_{\Gamma,\tau}(E)$ with cocycle $c$ 
is equivalent to a representation  $\rho:\Gamma_c \to  \Aut_{\Gamma_c,\tau}(E)$  
fitting in the  following commutative diagram, where $\widetilde{\rho}$
is the induced representation

\begin{displaymath}
  \begin{CD}
  0@>>>Z' @>>>\Gamma_c@>>> \Gamma@>>>1\\
  @.@V \Id VV@V \rho VV@V \tilde\rho VV\\
  1@>>> Z' @>>>\Aut_{\Gamma_c,\tau}(E)@>>>\Aut_{\Gamma,\tau}(E')@>>>1.
  \end{CD}
  \end{displaymath}

This completes the proof.
\end{proof}


Recall that a $G$-bundle $E$ is said to be {\bf simple} if $\Aut(E)\cong Z$.
We have the following.

\begin{proposition}\label{fix-simple}
Let $E$ be a simple $G$-bundle over $X$ such that 
\begin{equation}\label{condition-on-E}
E\cong\begin{cases} 
\gamma^*E&\mbox{for every}\;\; \gamma\in \Gamma^+\\
\gamma^*\tau(E)&\mbox{for every}\;\; \gamma\in \Gamma^-.
\end{cases}
\end{equation}
Then $E$ admits a  $(\Gamma,\tau,c)$-equivariant structure with 
$c\in Z^2_\tau(\Gamma,Z)$.
\end{proposition}
\begin{proof}
Condition (\ref{condition-on-E}) implies the existence of
an exact sequence 
$$
1\to \Aut(E)\to \Aut_{\Gamma,\tau}(E) \to \Gamma\to 1. 
$$
Now, since $E$ is simple $\Aut(E)\cong Z$ and hence we have an extension
$$
1\to Z\to \Aut_{\Gamma,\tau}(E) \to \Gamma\to 1. 
$$
This extension is determined by a cocycle $c\in Z^2_\tau(\Gamma,Z)$, which is 
precisely the obstruction to having a  $(\Gamma,\tau)$-equivariant structure
on $E$, i.e. a homomorphism $\Gamma\to \Aut_{\Gamma,\tau}(E)$ splitting the 
exact sequence. However we have a twisted homomorphism  of $\Gamma$
in $\Aut_{\Gamma,\tau}(E)$ with cocycle $c$, that is, a 
$(\Gamma,\tau,c)$-equivariant structure.
\end{proof}

\subsection{Isotropy subgroups associated to a 
$(\Gamma,\tau,c)$-equivariant structure}
We will assume that $\Gamma^+\neq \{1\}$.
The case $\Gamma^+=\{1\}$ has been 
extensively studied in \cite{biswas-garcia-prada,biswas-garcia-prada-hurtubise,
biswas-garcia-prada-hurtubise2} and corresponds to the study of twisted real 
structures on $E$.

Let $x\in X$, and 
$$
\Gamma_x:=\{\gamma\in \Gamma^+\;\;:\;\;\gamma(x)=x\}
$$
be the corresponding {\bf isotropy subgroup}.
Let $\PPP=\{x\in X \, : \,  \Gamma_x\neq \{1\}\}$.

The following is well-known (see \cite{nasatyr-steer} for example).

\begin{proposition}
(1) $\PPP$ consists of a finite number of points 
$\{ x_1, \ldots, x_r \} \subset X$.

(2) For  each $x_i\in \PPP$, 
$\Gamma_{x_i}$ is cyclic.  
\end{proposition}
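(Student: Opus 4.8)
The plan is to deduce both statements from the fact that $\Gamma$ is a \emph{finite} group of holomorphic automorphisms of a compact Riemann surface $X$ of genus $g\geq 2$, together with the classical theory of group actions on Riemann surfaces. For part (1), I would argue as follows. Each non-identity $\gamma\in\Gamma$ is a holomorphic automorphism of $X$ of finite order, hence its fixed-point set $X^\gamma$ is an analytic subset of $X$; since $g\geq 2$, no non-identity automorphism is the identity, so $X^\gamma$ is a proper analytic subset of the compact connected one-dimensional complex manifold $X$, and therefore $X^\gamma$ is finite. (Alternatively, $X^\gamma$ finite follows from the Lefschetz fixed point formula, or from the Riemann--Hurwitz count applied to the cyclic quotient $X\to X/\langle\gamma\rangle$.) Then $\PPP=\bigcup_{1\neq\gamma\in\Gamma}X^\gamma$ is a finite union of finite sets, hence finite, and we write $\PPP=\{x_1,\dots,x_r\}$.

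For part (2), I would use the fact that the action of $\Gamma$ on $X$ is properly discontinuous away from $\PPP$ and that the stabilizer $\Gamma_{x_i}$ acts on the tangent space $T_{x_i}X$ by the isotropy (differential) representation $\Gamma_{x_i}\to \GL(T_{x_i}X)=\C^*$. The key point is that this representation is injective: if $\gamma\in\Gamma_{x_i}$ acts trivially on $T_{x_i}X$, then since $\gamma$ is holomorphic and of finite order it is linearizable near $x_i$ (average a local coordinate over the finite group $\langle\gamma\rangle$), so $\gamma$ is the identity on a neighbourhood of $x_i$, and by the identity theorem $\gamma=\Id_X$; because $g\geq 2$ this forces $\gamma=1$ in $\Gamma$. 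Hence $\Gamma_{x_i}$ embeds as a finite subgroup of $\C^*$, and every finite subgroup of $\C^*$ is cyclic (it consists of roots of unity). Therefore $\Gamma_{x_i}$ is cyclic.

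I would then note that the linearization argument also gives the finiteness in part (1) more directly: near a fixed point a non-identity element acts as $z\mapsto \zeta z$ with $\zeta$ a nontrivial root of unity, so the fixed point is isolated, whence $X^\gamma$ is discrete and, by compactness of $X$, finite. This makes the two parts essentially share one lemma: \emph{a finite-order holomorphic automorphism of $X$ is linearizable about any fixed point, and its differential there is a nontrivial root of unity unless the automorphism is the identity}. I would state and prove this local linearization as the single technical input (averaging a coordinate chart over the cyclic group it generates), and the rest is bookkeeping.

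The main obstacle — really the only substantive step — is the local linearization and the injectivity of the isotropy representation; everything else (finite union of finite sets; finite subgroups of $\C^*$ are cyclic) is standard. One has to be slightly careful that $g\geq 2$ is used exactly once, to rule out the possibility that some nontrivial mapping-class-group element acts on $X$ as the identity automorphism: in genus $g\geq 2$ the map $\Gamma\into\Aut(X)$ provided by Theorem~\ref{kerckhoff} is injective and $\Aut(X)$ is finite, which is precisely what legitimizes treating $\Gamma$ as a genuine finite group of biholomorphisms. I do not expect any difficulty beyond making these classical facts explicit.
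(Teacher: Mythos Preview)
Your argument is correct and is the standard one. The paper, however, states this proposition without proof, treating it as a classical fact about finite groups of holomorphic automorphisms of a compact Riemann surface; there is no argument in the paper to compare against. Your linearization-plus-isotropy-representation approach is exactly the expected justification and fills in what the authors leave implicit.
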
 

Let $c_x\in Z^2(\Gamma_x,Z')$ be the restriction of $c$ to $\Gamma_x$
(note that the action of $\Gamma_x$ on $Z'$ is trivial since 
$\Gamma_x\subset\Gamma^+$). Define the  $c_x$-{\bf twisted character variety} of  $\Gamma_x$ in $G$
as the set
$$ 
R_{c_x}(\Gamma_x,G):= \Hom_{c_x}(\Gamma_x,G)/G,
$$
where
$$
\Hom_{c_x}(\Gamma_x,G):= \{\sigma:\Gamma_x\to G\;\;|\;\;
\sigma(\gamma\gamma')=c_x(\gamma,\gamma')\sigma(\gamma)\sigma(\gamma')\},
$$
and two elements $\sigma,\sigma'\in \Hom_{c_x}(\Gamma_x,G)$ 
are equivalent under the action of
$G$ if  
$$
\sigma'(\gamma)=g^{-1}\sigma(\gamma)g\;\;\mbox{for some}\;\; g\in G.
$$

\begin{proposition}\label{pseudorep}
A $(\Gamma,\tau,c)$-equivariant structure 
on a $G$-bundle $\pi: E\to X$ defines for every $x\in \PPP$ an element
$\sigma_x\in  R_{c_x}(\Gamma_x,G)$.
\end{proposition}

\begin{proof}

For each $x\in \PPP $ and $e\in E$ such that $\pi(e)=x$, a straightforward computation shows that the map
$\sigma_e: \Gamma_x \to G$ given by 

\begin{equation}\label{eqn:pseudo-hom-def}
\widetilde{\gamma}(e)=e\sigma_e(\gamma)
\end{equation}
defines an element in $\Hom_{c_x}(\Gamma_x,G)$. Moreover,  if
$e'\in \pi^{-1}(x)$, with $e'=eg$ for $g\in G$, then
$\sigma_{e'}(\gamma)=g^{-1}\sigma_e(\gamma)g$, proving the assertion.
\end{proof}


\begin{remark}
The composition of $\sigma_e$ with 
the projection $G\to G/Z'$, defines a homomorphism $\rho_e:\Gamma_x\to G/Z'$. 
Of course, $c$ restricted to $\Gamma^+$ is trivial,
i.e., if the restriction of the action of $\Gamma$ to $\Gamma^+$ defines 
a genuine $\Gamma^+$-equivariant structure on $E$, then 
$\sigma_e$ itself is a homomorphism, and $\sigma_x$ is an element of the 
character variety $R(\Gamma_x,G):=\Hom(\Gamma_x,G)/G$. 
\end{remark}

The  following is clear.
\begin{proposition}
Let $c$ and $c'$ be $2$-cocycles in $Z^2_\tau(\Gamma,Z')$. Let 
$\sigma_x \in R_{c_x}(\Gamma_{x},G)$ and $\sigma'_x \in 
R_{c'_x}(\Gamma_{x},G)$  be 
corresponding classes. Then the projections of $\sigma_x$ and
$\sigma'_x$ in $R(\Gamma_x,G/Z')$ coincide.
\end{proposition}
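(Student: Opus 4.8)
The plan is to unwind the definitions and reduce everything to the behaviour of the induced bundle maps on the quotient bundle $E'=E/Z'$. Recall from the discussion preceding \eqref{coboundary} that the relevant hypothesis here is that the two pseudoequivariant structures --- the family $\{\widetilde{\gamma}\}_{\gamma\in\Gamma}$ with cocycle $c$ which gives rise to $\sigma_x$, and the family $\{\widetilde{\gamma}'\}_{\gamma\in\Gamma}$ with cocycle $c'$ which gives rise to $\sigma'_x$ --- induce the \emph{same} $\Gamma$-equivariant structure on $E'$. By that discussion this is equivalent to the existence of a function $f\colon\Gamma\to Z'$ with $\widetilde{\gamma}'=f(\gamma)\widetilde{\gamma}$ for all $\gamma\in\Gamma$, the cocycles being related by the coboundary formula \eqref{coboundary}.

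First I would fix $x\in\PPP$ and $e\in\pi^{-1}(x)$. By \eqref{eqn:pseudo-hom-def} we have $\widetilde{\gamma}(e)=e\,\sigma_e(\gamma)$ and $\widetilde{\gamma}'(e)=e\,\sigma'_e(\gamma)$ for $\gamma\in\Gamma_x$, where, by Proposition \ref{change-in-fibre}(1) and Proposition \ref{pseudoreps}, $\sigma_e$ is a pseudohomomorphism with cocycle $c|_{\Gamma_x}$ representing $\sigma_x$ and $\sigma'_e$ is one with cocycle $c'|_{\Gamma_x}$ representing $\sigma'_x$. Combining these with $\widetilde{\gamma}'=f(\gamma)\widetilde{\gamma}$ and using that $Z'$ is central in $G$, one reads off $\sigma'_e(\gamma)=f(\gamma)\,\sigma_e(\gamma)$ for every $\gamma\in\Gamma_x$.

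Next I would apply the projection $p\colon G\to G/Z'$. Since $f(\gamma)\in Z'=\ker p$, we obtain $p\circ\sigma'_e=p\circ\sigma_e$ as maps $\Gamma_x\to G/Z'$; both are equal to the genuine homomorphism $\rho_e$ determined by the common $\Gamma$-equivariant structure on $E'$ via $\overline{\widetilde{\gamma}}(\bar e)=\bar e\,\rho_e(\gamma)$, where $\bar e$ denotes the image of $e$ in $E'$. Passing to $(G/Z')$-conjugacy classes, and noting that replacing $e$ by $eg$ conjugates $\sigma_e$, $\sigma'_e$ and $\rho_e$ simultaneously by Proposition \ref{change-in-fibre}(2), we conclude that the images of $\sigma_x\in R_c(\Gamma_x,G)$ and $\sigma'_x\in R_{c'}(\Gamma_x,G)$ in $R(\Gamma_x,G/Z')=\Hom(\Gamma_x,G/Z')/(G/Z')$ are both the class of $\rho_e$, hence coincide.

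There is no genuine obstacle here; the statement is clear once the definitions are unwound. The only point requiring a little care is to identify correctly the relation between the two structures (that they descend to the same equivariant structure on $E'$) and to invoke the coboundary relation \eqref{coboundary}. One can also avoid mentioning $f$ altogether: the induced maps $\overline{\widetilde{\gamma}}$ and $\overline{\widetilde{\gamma}'}$ on $E'$ agree by hypothesis, and $\rho_e$ is defined directly from them, so the same fibrewise computation shows that $p\circ\sigma_e$ and $p\circ\sigma'_e$ are both equal to $\rho_e$.
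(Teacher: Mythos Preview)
Your proof is correct and in the spirit the paper intends: the paper itself offers no argument beyond the remark ``the following is clear'', so your unpacking of the definitions --- identifying the implicit hypothesis that the two pseudoequivariant structures descend to the same $\Gamma$-equivariant structure on $E'$, extracting the relation $\sigma'_e(\gamma)=f(\gamma)\sigma_e(\gamma)$ from $\widetilde{\gamma}'=f(\gamma)\widetilde{\gamma}$, and then projecting to $G/Z'$ --- is exactly the computation that makes the claim evident. Your alternative phrasing at the end, bypassing $f$ and working directly with the induced maps on $E'$, is equally valid and perhaps the cleanest way to see it.
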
  

The next result shows that the $\Gamma$ action defines a bijection between spaces 
of twisted representations of isotropy groups over points in $X$ related by the action of $\Gamma$.

\begin{proposition}
(1)The action of $\Gamma$ on $X$ induces an action of $\Gamma$ (and in particular of
$\Gamma^+$) on $\PPP$. 

(2) Let 
$\QQQ=\PPP/\Gamma^+$. If $x$ and $x'$ are in the same class in $\QQQ$ there is an isomorphism
$R_{c_x}(\Gamma_{x},G)\cong R_{c_{x'}}(\Gamma_{x'},G)$ (as pointed sets)
under which $\sigma_x$ and $\sigma_{x'}$ are in correspondence. This isomorphism 
induces a canonical isomorphism $R(\Gamma_x, G/ Z')\cong R(\Gamma_{x'}, G/Z')$. 

(3) If two points $y,y'\in \QQQ$ are in correspondence 
under the residual action of $\Z/2\cong \Gamma/\Gamma^+$ on $\QQQ$, then 
$R_{c_x}(\Gamma_{x},G)$ and $R_{c_{x'}}(\Gamma_{x'},G)$ are in a bijective correspondence given by
$\sigma_x\mapsto \tau\sigma_{x'}$ for any representatives 
$x,x'\in \PPP$ of  $y,y'\in \QQQ$ respectively.
\end{proposition}

\begin{proof}
Statement (1) follows from the fact that two points on $X$ connected by the action of $\Gamma$ must have conjugate isotropy subgroups.
To prove (2), if two points $x,x'\in \PPP$ are in the same class in $\QQQ$, then there exists $\gamma_0 \in \Gamma^+$ such that 
$x' = \gamma_0 \cdot x$ and so  $\Gamma_{x'}=\gamma_0 \Gamma_x\gamma_0^{-1}$. Let $\tilde{\gamma}_0$ denote the lift of $\gamma_0$ to 
$\Aut_{\Gamma^+}(E)$, where $\Aut_{\Gamma^+}(E)$ is the preimage of $\Gamma^+$ in the exact sequence (\ref{exact-aut}).
Given any $e_x$ in the fibre $E_x$, let $e_{x'} := \tilde{\gamma}_0 (e_x)$. For any $\gamma \in \Gamma_{x}$, let $\gamma' = \gamma_0 \gamma \gamma_0^{-1}$ be the corresponding element of $\Gamma_{x'}$ and let $\tilde{\gamma}$ and $\tilde{\gamma}' = \tilde{\gamma}_0 \tilde{\gamma} \tilde{\gamma}_0^{-1}$ denote the respective lifts to $\Aut_\Gamma(E)$. Using \eqref{eqn:pseudo-hom-def} we have
\begin{equation*}
\tilde{\gamma}(e_x) = e_x \sigma_{e_x}(\gamma) \quad \text{and} \quad \tilde{\gamma}'(e_{x'}) = e_{x'} \sigma_{e_{x'}}(\gamma') .
\end{equation*}
Therefore
\begin{equation*}
e_{x'} \sigma_{e_{x'}}(\gamma') = \tilde{\gamma}' (e_{x'}) = \tilde{\gamma}_0 \tilde{\gamma} \tilde{\gamma}_0^{-1} (e_{x'}) = \tilde{\gamma_0} \tilde{\gamma}(e_x) = \tilde{\gamma}_0 e_x \sigma_{e_x}(\gamma) = e_{x'} \sigma_{e_x}(\gamma)
\end{equation*}
and so $\sigma_{e_{x'}}(\gamma') = \sigma_{e_{x'}} (\gamma_0 \gamma \gamma_0^{-1}) = \sigma_{e_x}(\gamma)$. Therefore we see that $\sigma_{e_x}$ determines $\sigma_{e_{x'}}$ and vice versa, and so the same is true for $\sigma_{x}$ and $\sigma_{x'}$. 

Therefore, a choice of $\gamma_0$ such that $x' = \gamma_0 \cdot x$ determines a bijection $R_{c_x}(\Gamma_x, G) \rightarrow 
R_{c_{x'}}(\Gamma_{x'}, G)$  sending  
$\sigma \mapsto \sigma'$ with $\sigma'(\gamma') := \sigma(\gamma_0^{-1} \gamma' \gamma_0)$, and this bijection maps $\sigma_x$ to $\sigma_{x'}$.

An element  $\sigma\in \Hom_{c_x}(\Gamma_x, G)$ descends to a homomorphism $\bar{\sigma} : \Gamma_x \rightarrow G / Z'$.  
The bijection $\sigma \mapsto \sigma'$ defined above induces a map $\bar{\sigma} \mapsto \bar{\sigma}'$ defined by
\begin{equation*}
\bar{\sigma}' (\gamma') := \bar{\sigma}(\gamma_0^{-1} \gamma' \gamma_0) .
\end{equation*}
Given any other choice $\gamma_1$ such that $\Gamma_{x'} = \gamma_1 \Gamma_x \gamma_1^{-1}$, we have $\gamma_1 \gamma_0^{-1} \in \Gamma_{x'}$ and so (since $\bar{\sigma}$ is a homomorphism) for any $\gamma' \in \Gamma_{x'}$ we have
\begin{equation*}
\bar{\sigma}(\gamma_1^{-1} \gamma' \gamma_1) = \bar{\sigma}(\gamma_1^{-1} \gamma_0) \bar{\sigma}(\gamma_0^{-1} \gamma' \gamma_0) \bar{\sigma}(\gamma_1^{-1} \gamma_0)^{-1} .
\end{equation*}
Therefore the conjugacy class of $\bar{\sigma}'$ in $R(\Gamma_{x'}, G / Z')$ is well-defined and independent of the choice of $\gamma_0$ such that $\Gamma_{x'} = \gamma_0 \Gamma_x \gamma_0^{-1}$.

(3) follows from a straightforward computation.

\end{proof}

\begin{remark}
Note that if in (3) $y\in \QQQ$ is a fixed point under the residual action of $\Z/2$ then the twisted representation $\sigma_x$ must lie 
in the real group $G^\tau$.
\end{remark}

\subsection{Twisted $\Gamma$-equivariant structures on
associated vector bundles}\label{vector-bundles}
Let now $V$ be a rank $n$  holomorphic complex vector bundle over $X$. 
Let $\tau_{\VV}$ be a conjugation on the fibre $\VV$ of $V$. Consider the 
action of $\Gamma$ on $\C^*$ given by
\begin{equation}\label{action-on-C}
 z^\gamma= \begin{cases}
z&\mbox{if}\;\;  \gamma\in \Gamma^+\\
\overline{z}&\mbox{if}\;\; \gamma\in \Gamma^-.
\end{cases}
\end{equation}
Given a cocycle  $c\in Z^2(\Gamma,\C^*)$ for this action, similarly as for $G$-bundles 
one can define  a $(\Gamma,\tau_\VV,c)$-equivariant structure on $V$ as a  
$c$-twisted representation of $\Gamma$ in $\Aut_{\Gamma,\tau_\VV}(V)$, where  $\Aut_{\Gamma,\tau_\VV}(V)$
is defined in a similar fashion to the $G$-bundle case.

Now, let $E$ be a principal $G$-bundle and  $\rho:G\to GL(\VV)$ a representation of
$G$ in a complex vector space $\VV$. Consider the associated vector bundle
$V:=E(\VV)$. Let $\tau$ and $\tau_\VV$  be conjugations of $G$ and $\VV$, respectively
Let $c\in Z^2_\tau(\Gamma,Z')$ and $c_\rho\in Z^2(\Gamma,\C^*)$ be the cocycle 
induced by $\rho|_{Z'}:Z'\to \C^*\cong Z(\GL(\VV))$. If $\rho$ is compatible with the conjugations
 $\tau$ and $\tau_\VV$, then there is a homomorphism
$\Aut_{\Gamma,\tau}(E)\to \Aut_{\Gamma,\tau_\VV}(V)$, and it is clear that
a $(\Gamma,\tau,c)$-equivariant structure on $E$ defines a 
$(\Gamma,\tau_\VV,c_\rho)$-equivariant structure on $V$. In particular if 
$Z'\subset \ker \rho$, then  $c_\rho$ is trivial and hence  we obtain
$(\Gamma,\tau_\VV)$-equivariant structure on $V$. If moreover $\Gamma=\Gamma^+$,
this is a genuine $\Gamma$-equivariant structure on $V$.

\section{ Twisted equivariant structures on Higgs bundles}

In this section $X$ is a compact Riemann surface of genus bigger than one,
$\Gamma$ is a subgroup of $\Aut(X)$, the group of holomorphic or 
antiholomorphic automorphisms of $X$, and $G$ is 
a  connected {\bf real} reductive Lie group. As in Section \ref{higgs-bundles},
we fix a maximal compact 
subgroup $H$ of $G$.  The Lie algebra $\lieg$ of $G$ is equipped with 
an involution $\theta$ 
that gives the Cartan decomposition  $\lieg=\lieh \oplus \liem$,
where $\lieh$ is the Lie algebra of $H$.
We choose a complex conjugation $\tau$ of $H^\C$, and a conjugation 
$\tau_{\liem^\C}$ of $\liem^\C$, such that the isotropy representation 
$\iota: H^\C\to \Aut(\liem^\C)$ is compatible with $\tau$ and $\tau_{\liem^\C}$.
This is the case if for example $G$ is a real form of a complex reductive
group $G^\C$ and we choose a complex conjugation $\widetilde{\tau}$ of $G^\C$ 
commuting with the Cartan involution of $G$ extended to $G^\C$. The conjugation
$\tau$ and  $\tau_{\liem^\C}$ induced by  $\widetilde{\tau}$ satisfy the 
compatibility condition with $\iota$. As proved by Cartan, we can always choose
a compact conjugation $\widetilde{\tau}$ commuting with the Cartan involution.
This is the choice which is relevant in connection to the study of $\Gamma$ on 
the moduli space of representations $\calR(S,G)$. 
 
\subsection{Twisted $\Gamma$-equivariant structures on $G$-Higgs bundles}

Let $(E,\varphi)$ be a $G$-Higgs bundle over $X$. We will define now 
twisted $\Gamma$-equivariant structures on $(E,\varphi)$. To do this,
let $Z=Z(H^\C)$, and let $Z'\subset Z$ be a subgroup invariant under the 
action of $\tau$.  Choose a $2$-cocycle $c\in Z^2_\tau(\Gamma,Z')$. 
Recall from Section \ref{vector-bundles}, that this defines a $2$-cocycle 
$c_\iota\in Z^2(\Gamma,\C^*)$,
via the isotropy representation $\iota: H^\C\to \GL(\liem^\C)$. 

If the  $H^\C$-bundle $E$ is  equipped with a  
$(\Gamma,\tau,c)$-equivariant structure, from Section 
\ref{vector-bundles}, the vector bundle $E(\liemc)$ inherits a 
$(\Gamma,\tau_{\liemc},c_\iota)$-equivariant structure. On the other hand, 
the canonical bundle $K$ over $X$ has a natural 
$(\Gamma,\tau_\C)$-equivariant structure induced by the action of $\Gamma$ on $X$.  We conclude then that the bundle $E(\liemc)\otimes K$ has a 
$(\Gamma,\tau_{\liemc},c_\iota)$-equivariant structure (where we are omitting 
$\tau_\C$ in the notation). In fact, we will abuse notation, and use $\tau$ to 
refer to both $\tau$ and $\tau_{\liemc}$ in the sequel.

A {\bf $(\Gamma,\tau,c)$-equivariant structure on $(E,\varphi)$}
is a $(\Gamma,\tau,c)$-equivariant structure on $E$, such that 
for every $\gamma\in \Gamma$ the following diagram commutes: 
$$
\begin{matrix}
E(\liemc)\otimes K & \stackrel{\widetilde{\gamma}}{\longrightarrow} & 
E(\liemc)\otimes K\\
~\Big\uparrow\varphi && ~\,\text{  }~\,\text{ }\Big\uparrow \varphi\\
X & \stackrel{\gamma}{\longrightarrow} & X
\end{matrix},
$$
where $\widetilde{\gamma}$ is the collection of maps defining the 
$(\Gamma,\tau, c_\iota)$-equivariant structure on $E(\liem^\C)\otimes K$ 
defined above.  


The notion  of stability for $G$-Higgs bundles given in Section 
\ref{higgs-bundles}
(see \cite{garcia-prada-gothen-mundet}) can be 
extended in a natural way to  $G$-Higgs bundles equipped with  
twisted equivariant structures. This is done in a similar way to that in the study of pseudoreal Higgs bundles 
\cite{biswas-garcia-prada-hurtubise,biswas-calvo-garcia-prada}. To explain this, 
we consider the adjoint bundle of groups associated to the $H^\C$-bundle $E$.
This is defined as $\Ad(E)=E\times_{H^\C} H^\C$, where $H^\C$ acts on 
$E\times H^\C$ by
$$
(e,g)\cdot h=(eh, h^{-1}gh)\;\;\mbox{for}\;\; h,g\in H^\C\;\;\mbox{and}\;\; e\in E.
$$
We define now for $\gamma\in \Gamma$ a map 
$\widetilde{\gamma}^{\Ad}:E\times H^\C \to E\times H^\C$ given by 
\begin{equation}\label{maps-adjoint}
\widetilde{\gamma}^{\Ad}(e,g)=(\widetilde{\gamma}(e), 
\gamma(g))\;\;\mbox{for}\;\;
\gamma\in \Gamma, e\in E\;\;\mbox{and}\;\; g\in H^\C,
\end{equation}
where, recall
$$
\gamma(g)= \begin{cases}
g&\mbox{if}\;\;  \gamma\in \Gamma^+\\
\tau(g)&\mbox{if}\;\; \gamma\in \Gamma^-.
\end{cases}
$$
\begin{proposition}
The maps $\{\widetilde{\gamma}^{\Ad}\}_{\gamma\in \Gamma}$
 define a $\Gamma$-equivariant structure on $\Ad(E)$.
\end{proposition}
\begin{proof}
First we have to check that the maps (\ref{maps-adjoint}) descend to $\Ad(E)$.
Let $e\in E$ and $g,h\in H^\C$. We have
$$
\widetilde{\gamma}^{\Ad}(eh,h^{-1}gh)=(\widetilde{\gamma}(eh),\gamma(h^{-1}gh))=
(\widetilde{\gamma}(e)\gamma(h),\gamma(h)^{-1}\gamma(g)\gamma(h)),
$$ 
but  $(\widetilde{\gamma}(e)\gamma(h),\gamma(h)^{-1}\gamma(g)\gamma(h))$ is equivalent
to $(\widetilde{\gamma}(e),\gamma(g))$ via the action of $\gamma(h)$, and we thus  have
well-defined maps $\widetilde{\gamma}^{\Ad}: \Ad(E)\to\Ad(E)$.

Since the action of the centre of $H^\C$ by inner automorphisms on $H^\C$ is trivial, the $2$-cocycle $c$ has no effect,  and one checks that 
$$
\widetilde{\gamma\gamma'}^{\Ad}=\widetilde{\gamma}^{\Ad}\widetilde{\gamma'}^{\Ad}.
$$
  
\end{proof}

As studied in 
\cite{biswas-garcia-prada-hurtubise,biswas-calvo-garcia-prada}, 
the main change in the
definition of stability for a  $G$-Higgs bundle equipped with a 
twisted equivariant structure, in relation to the usual stability condition 
for the underlying $G$-Higgs bundle  given in Section 
\ref{higgs-bundles}, is that we must consider only holomorphic
reductions $E_{P_s} \,\subset\, E$ of $E$ from $H^\C$ to $P_s$ such that 
\begin{equation}\label{eq:stabilitycondition}
\widetilde{\gamma}^{\Ad}(\Ad(E_{P_{s}}))\,=\,\Ad(E_{P_{s}})\;\;\mbox{for every}\;\;
\gamma\in \Gamma.
\end{equation}

To define the moduli space of twisted $\Gamma$-equivariant $G$-Higgs bundles,
 we fix the cocycle
$c\in Z^2_\tau(\Gamma,Z')$ and the elements 
$\sigma_i\in R_{c_{x_i}}(\Gamma_{x_i},H^\C)$ for every point $x_i\in\PPP$ defined by 
Proposition \ref{pseudorep}. We will need at some point the projection of 
$\sigma_i$ in $R(\Gamma_{x_i},H^\C/Z')$ that we will denote by $[\sigma_i]$. 
Let $\sigma=(\sigma_1,\cdots, \sigma_r)$.
We  define $\cM(X,G,\Gamma,\tau,c)$ to be the  
{\bf moduli space  of polystable 
$(\Gamma,\tau,c)$-equivariant $G$-Higgs bundles}. 
An analytic construction of these spaces can be given using slices.
The subvariety of $\cM(X,G,\Gamma,\tau,c)$  with fixed 
classes $\sigma$ will be denoted by $\cM(X,G,\Gamma,\tau,c,\sigma)$. 

We will assume now that the conjugation $\tau$ of $H^\C$ commutes with the
compact conjugation of $H^\C$ defining a maximal compact subgroup 
$H\subset H^\C$, in other words, that $H$ is invariant under $\tau$, This is indeed a condition satisfied in connection to 
our application to the study of fixed points in the moduli space of
representations of the fundamental group of the surface in $G$.
Under this assumption we have the following.

\begin{proposition}\label{action}
Let $E$ be a $H^\C$-bundle over $X$ equipped with a 
$(\Gamma,\tau,c)$-equivariant structure, where $c\in Z^2_\tau(\Gamma,Z')$, 
and let $H\subset H^\C$ be a maximal compact subgroup preserved 
by $\tau$, so that $Z'\subset H$. Then the twisted equivariant structure on $E$ 
induces a group action of $\Gamma$  on the space of reductions of structure
group of $E$ to $H$. 
\end{proposition}
\begin{proof}
Note that since $\tau$ preserves $H$, the action of $\Gamma$ on $H^\C$ induces 
an action of $\Gamma$ on $M:=H^\C/H$. So we have actions of $H^\C$ and $\Gamma$ 
on $M$ satisfying that
\begin{equation}\label{action-compatible}
\gamma\cdot(gm)=\gamma(g)(\gamma\cdot m)\;\;\mbox{for}\;\;\gamma\in\Gamma, g\in H^\C\;\;
\mbox{and}\;\; m\in M:=H^\C/H.
\end{equation} 

Recall that a reduction of structure group of $E$ to $H$ is a section 
of $E(M)$, the 
$M$-bundle associated to $E$ via the natural action of $H^\C$ on 
$M:= H^\C/H$ on the  left. Such a section is equivalent to a 
$H^\C$-antiequivariant map  $\psi:E\to M$, i.e.,
$\psi(eg)=g^{-1}\psi(e)$, for $e\in E$ and $g\in H^\C$. 
For such a map $\psi$, and $\gamma\in \Gamma$, define a map
$\gamma\cdot\psi: E\to M$ given by
$$
(\gamma\cdot \psi)(e):=\gamma^{-1}\cdot \psi(\widetilde{\gamma}(e)),
$$
where $\widetilde{\gamma}$ is given by the $(\Gamma,\tau,c)$-equivariant
structure on $E$. We need to check first that $\gamma\cdot \psi$ is 
$H^\C$-antiequivariant.
$$
(\gamma\cdot \psi)(eg)=\gamma^{-1}\cdot\psi(\widetilde{\gamma}(eg))=
\gamma^{-1}\cdot \psi(\widetilde{\gamma}(e)\gamma(g))=
\gamma^{-1}\cdot(\gamma(g)^{-1}\psi(\widetilde{\gamma}(e))).
$$
But from (\ref{action-compatible}), we deduce that
$$
\gamma^{-1}\cdot(\gamma(g^{-1})\psi(\widetilde{\gamma}(e)))=
g^{-1}(\gamma^{-1}\cdot \psi(\widetilde{\gamma}(e)))=
g^{-1}((\gamma\cdot \psi)(e)),
$$
proving the antiequivariance of $\gamma\cdot\psi$. To check that this defines 
an action of $\Gamma$ on the space of sections of $E(M)$, we consider for 
$\gamma,\gamma'\in \Gamma$
$$
\begin{array}{lll}
((\gamma\gamma')\cdot \psi)(e)&=&
(\gamma\gamma')^{-1}\cdot\psi(\widetilde{\gamma\gamma'}(e)) \\
& =&
(\gamma')^{-1}\gamma^{-1}\cdot(\psi(\widetilde{\gamma}(\widetilde{\gamma'}(e))c(\gamma,\gamma'))  \\ 
&   = &(\gamma')^{-1}\gamma^{-1}\cdot (c(\gamma,\gamma')^{-1}
\psi(\widetilde{\gamma}(\widetilde{\gamma'}(e))).
\end{array}
$$
But, since $Z'\subset H$, the action of  $c(\gamma,\gamma')^{-1}$ is trivial,
and we see that
$$
(\gamma\gamma')\cdot \psi=\gamma'\cdot(\gamma\cdot \psi),
$$
completing the proof.
\end{proof}

Given a $(\Gamma,\tau,c)$-equivariant $G$-Higgs bundle $(E,\varphi)$ such that
$Z'\subset H$ and $H$ is invariant by $\tau$, by Proposition \ref{action} we can consider the action of 
$\Gamma$ on 
the space of metrics on $E$, that is on the space of sections of $E(H^\C/H)$.
The analysis done for the Hitchin--Kobayashi correspondence given in Section
\ref{higgs-bundles} can be extended to this equivariant situation  
to prove the following (see \cite{biswas-garcia-prada-hurtubise,biswas-calvo-garcia-prada}).

\begin{theorem}\label{theo:hk-twisted-pairs-equivariant}
Let $(E,\varphi)$ be a $G$-Higgs bundle over a Riemann surface $X$
equipped with a $(\Gamma,\tau,c)$-equivariant structure, with cocycle 
$c\in Z^2(\Gamma,Z')$, where  $Z'\subset H$, and $H$ is invariant by $\tau$.
 Then $(E,\varphi)$ is polystable as a $(\Gamma,\tau,c)$-equivariant
Higgs bundle if and only if 
there exists a $\Gamma$-invariant reduction $h$ of the structure group of $E$ from $H^\C$ to
$H$,   such that
\begin{equation}\label{eq:Hitchin-Kobayashi-h}
F_h - [\varphi,\tau_h(\varphi)]=0 . \\
\end{equation}
\end{theorem}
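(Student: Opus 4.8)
The plan is to prove the two implications of the equivalence separately, in each direction reducing to the non-equivariant Hitchin--Kobayashi correspondence of Theorem \ref{theo:hk-twisted-pairs}. The structural remark used throughout is that, because $Z'\subset H$, Lemma \ref{action} applied with $H'=H$ furnishes a genuine action of the finite group $\Gamma$ on the space $\mathrm{Met}(E):=\Gamma(E(H^\C/H))$ of Hermitian metrics on $E$ (i.e. of reductions of the structure group from $H^\C$ to $H$), and likewise on the space of holomorphic reductions to any parabolic $P_s$, since $Z'\subset Z(H^\C)\subset P_s$. Moreover this action of $\Gamma$ on $\mathrm{Met}(E)$ preserves the Hitchin equation $F_h-[\varphi,\tau_h(\varphi)]=0$: the $(\Gamma,c)$-pseudoequivariant structure is by definition compatible with $\varphi$, and the curvature and the involution $\tau_h$ are natural. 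Finally, fixing the $\Gamma$-invariant hyperbolic volume form on $X$, this $\Gamma$-action is by isometries for the natural $L^2$ metric on $\mathrm{Met}(E)$.

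\emph{A $\Gamma$-invariant solution implies equivariant polystability.} Suppose $h$ is a $\Gamma$-invariant reduction solving the Hitchin equation. By Theorem \ref{theo:hk-twisted-pairs}, $(E,\varphi)$ is polystable as an ordinary $G$-Higgs bundle, hence semistable, so $\deg(E)(\sigma,s)\ge 0$ for \emph{every} holomorphic reduction $\sigma$ to any $P_s$ with $\varphi\in H^0(E_\sigma(\liem_s)\otimes K)$; restricting to $\Gamma$-invariant $\sigma$ yields equivariant semistability. For the polystability refinement, take a $\Gamma$-invariant $\sigma$ with $\varphi\in H^0(E_\sigma(\liem_s)\otimes K)$ and $\deg(E)(\sigma,s)=0$, and run the usual Chern--Weil argument: restricting the $\Gamma$-invariant metric $h$ to the reduced bundle, vanishing of the degree forces the nilradical component (the second fundamental form) of the induced connection to vanish, producing a smooth, hence holomorphic, reduction $\sigma_L$ to the Levi $L_s$ with $\varphi\in H^0(E_{\sigma_L}(\liem^0_s)\otimes K)$. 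Since $h$ and $\sigma$ are $\Gamma$-invariant, every step of this construction is $\Gamma$-equivariant, so $\sigma_L$ is $\Gamma$-invariant, as required.

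\emph{Equivariant polystability implies the existence of a $\Gamma$-invariant solution.} First I would show that equivariant polystability implies ordinary polystability. Equivariant semistability implies ordinary semistability because the canonical destabilising reduction (of Harder--Narasimhan type) of $(E,\varphi)$ is unique, hence $\Gamma$-invariant, and so is tested by the equivariant condition. For polystability I would argue by induction on the rank: the socle of $(E,\varphi)$ (its maximal polystable sub-object) is canonical, hence $\Gamma$-invariant, and corresponds to a $\Gamma$-invariant reduction $\sigma$ to some $P_s$ with $\deg(E)(\sigma,s)=0$ and $\varphi\in H^0(E_\sigma(\liem_s)\otimes K)$; equivariant polystability then provides a $\Gamma$-invariant reduction to $L_s$, i.e. a $\Gamma$-invariant splitting $(E,\varphi)\cong(E_0,\varphi_0)\oplus(E_1,\varphi_1)$ with $(E_0,\varphi_0)$ polystable, and $(E_1,\varphi_1)$ is again equivariantly polystable of smaller rank, hence polystable by induction. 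Granting this, Theorem \ref{theo:hk-twisted-pairs} supplies \emph{some} solution $h_0$, not necessarily $\Gamma$-invariant; since $\Gamma$ permutes the non-empty solution set $\SSS\subset\mathrm{Met}(E)$, it remains to find a $\Gamma$-fixed point of $\SSS$. Here I would invoke the non-equivariant facts (from \cite{garcia-prada-gothen-mundet}) that the solution is unique up to $\Aut(E,\varphi)$ and that $\Aut(E,\varphi)$ is reductive for a polystable object: these identify $\SSS$, with the induced metric, with a finite-dimensional complete simply connected manifold of non-positive sectional curvature (a symmetric space of non-compact type, with possible Euclidean factors) on which $\Gamma$ acts by isometries, so the Cartan fixed point theorem --- applied to the circumcentre of a $\Gamma$-orbit --- produces a $\Gamma$-invariant solution $h\in\SSS$. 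When $(E,\varphi)$ is stable and simple, $\SSS$ is a single point and this is automatic. Alternatively, one can bypass the reduction to ordinary polystability and instead re-run the proof of the non-equivariant correspondence in \cite{garcia-prada-gothen-mundet} on the space of $\Gamma$-invariant metrics, using equivariant polystability for the a priori estimates and averaging minimising sequences over $\Gamma$.

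The main obstacle is the final averaging step: applying the Cartan fixed point theorem requires knowing that the solution set is a finite-dimensional, complete, non-positively curved homogeneous space carrying an isometric $\Gamma$-action, and this rests on the non-equivariant input --- uniqueness of the Hitchin metric up to $\Aut(E,\varphi)$ and reductivity of $\Aut(E,\varphi)$ in the polystable case --- which must be extracted carefully from \cite{garcia-prada-gothen-mundet}; the alternative route instead requires checking that the analytic estimates of \cite{garcia-prada-gothen-mundet} persist on the $\Gamma$-invariant slice, which is routine precisely because $\Gamma$ is finite and acts isometrically. A secondary point needing care is the passage from equivariant to ordinary polystability, which uses the canonicity, hence $\Gamma$-invariance, of the Harder--Narasimhan and socle data for $G$-Higgs bundles.
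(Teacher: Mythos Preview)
The paper does not actually give a proof of this theorem: immediately before the statement it simply remarks that ``the analysis done for the Hitchin--Kobayashi correspondence given in Section \ref{higgs-bundles} can be extended to this equivariant situation (see \cite{garcia-prada-gothen-mundet}),'' having first invoked Lemma \ref{action} with $H'=H$ to obtain the $\Gamma$-action on the space of metrics. That is exactly your \emph{alternative} route---re-running the variational argument of \cite{garcia-prada-gothen-mundet} on the $\Gamma$-invariant slice of $\mathrm{Met}(E)$---so your proposal subsumes the paper's approach.

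Your primary route is genuinely different and more structural: you treat Theorem \ref{theo:hk-twisted-pairs} as a black box and then average, applying the Cartan fixed point theorem to the finite-dimensional symmetric space $\Aut(E,\varphi)/K'$ parametrising the non-equivariant solutions. This has the virtue of separating the hard analysis from the finite-group geometry. The price is the intermediate step ``equivariant polystability $\Rightarrow$ ordinary polystability'': your socle/induction argument is phrased in vector-bundle language (``rank'', direct-sum splittings) and would need to be reformulated for a general real reductive $G$ in terms of iterated Levi reductions and the canonicity of the Jordan--H\"older data for $G$-Higgs bundles, which is available but not as immediate as you make it sound. The paper's one-line approach sidesteps this entirely by staying in the equivariant category throughout; yours is cleaner once that step is in place, and gives a transparent reason why a $\Gamma$-invariant solution exists.
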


From Theorems \ref{theo:hk-twisted-pairs-equivariant}
and \ref{theo:hk-twisted-pairs} we conclude the following.

\begin{proposition}\label{forgetful}
Let $Z'\subset Z\cap H$ and $c\in Z^2(\Gamma,Z')$, and assume that
$H$ is invariant by $\tau$. Then 
the forgetful map defines a morphism
$\cM(X,G,\Gamma,\tau,c,\sigma)\to \cM(X,G)$.
\end{proposition}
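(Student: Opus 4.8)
The plan is to show that forgetting the pseudoequivariant structure sends polystable $(\Gamma,c)$-pseudoequivariant $G$-Higgs bundles to polystable $G$-Higgs bundles, so that the set-theoretic forgetful assignment is a well-defined map of moduli spaces, and then to observe that it is a morphism because it is induced by a natural transformation of the corresponding moduli functors (or, equivalently, because it is an algebraic map on the gauge-theoretic models). The only substantive point is the preservation of polystability; the rest is bookkeeping.

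First I would reduce to a statement about solutions of the Hitchin equation. Let $(E,\varphi)$ with its $(\Gamma,c)$-pseudoequivariant structure be polystable \emph{as a pseudoequivariant Higgs bundle}. By hypothesis $Z'\subset Z\cap H$, so Theorem \ref{theo:hk-twisted-pairs-equivariant} applies and yields a $\Gamma$-invariant reduction $h$ of the structure group of $E$ from $H^\C$ to $H$ satisfying
\begin{equation*}
F_h - [\varphi,\tau_h(\varphi)]=0 .
\end{equation*}
Forgetting the $\Gamma$-invariance, $h$ is in particular \emph{a} reduction satisfying the Hitchin equation, so by Theorem \ref{theo:hk-twisted-pairs} the underlying $G$-Higgs bundle $(E,\varphi)$ is polystable. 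Hence the assignment $(E,\varphi,\{\widetilde\gamma\})\mapsto (E,\varphi)$ does land in $\cM(X,G)$. It also respects isomorphism: an isomorphism of $(\Gamma,c)$-pseudoequivariant $G$-Higgs bundles is in particular an isomorphism of $G$-Higgs bundles, so the map on isomorphism classes is well-defined. This gives the forgetful map as a map of sets $\cM(X,G,\Gamma,c,\sigma)\to\cM(X,G)$.

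Next I would upgrade this set map to a morphism of analytic (and, when $G$ is algebraic, algebraic) varieties. The clean way is to note that both moduli spaces are constructed by the same GIT/gauge-theoretic machinery: $\cM(X,G,\Gamma,c,\sigma)$ is a quotient of a space of $\Gamma$-invariant (twisted) Dolbeault data by the $\Gamma$-invariant gauge group, while $\cM(X,G)$ is the quotient of the full space of Dolbeault data by the full gauge group, and the inclusion of the $\Gamma$-invariant data into all data is a $\Gamma$-equivariant holomorphic (algebraic) map intertwining the two group actions and the two polystability (hence semistability) loci by the argument above. Therefore it descends to a morphism on quotients. Alternatively, at the level of functors: a family of pseudoequivariant Higgs bundles over a base $T$ has an underlying family of Higgs bundles over $T$, functorially in $T$, and by the universal/coarse-moduli property of $\cM(X,G)$ this induces the claimed morphism. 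Either formulation gives Proposition \ref{forgetful}.

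The main obstacle is really just the first step — making sure the two notions of polystability are compatible under forgetting the group action — and that is resolved precisely by chaining Theorems \ref{theo:hk-twisted-pairs-equivariant} and \ref{theo:hk-twisted-pairs} through the common solution $h$ of the Hitchin equation, which is why the hypothesis $Z'\subset Z\cap H$ (needed to invoke the equivariant Hitchin--Kobayashi correspondence) appears in the statement. One should also remark that the fixed classes $\sigma$ play no role here: they only cut out a union of connected components of the pseudoequivariant moduli space, and the forgetful map is defined on each such component in the same way. No estimates or delicate constructions are required beyond citing the two correspondences.
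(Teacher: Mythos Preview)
Your proposal is correct and follows exactly the approach the paper takes: the proposition is stated there as an immediate consequence of Theorems \ref{theo:hk-twisted-pairs-equivariant} and \ref{theo:hk-twisted-pairs}, chained through the common Hitchin solution $h$. Your additional remarks on well-definedness on isomorphism classes, the morphism structure, and the irrelevance of $\sigma$ are accurate elaborations that the paper leaves implicit.
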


\subsection{$\Gamma$-action on the moduli space of $G$-Higgs bundles}

Consider the action of $\Gamma$ 
on the  moduli space of $G$-Higgs bundles $\cM(X,G)$ given by the rule:  
$$
\gamma\cdot(E,\varphi)= \begin{cases}
(\gamma^*E,\gamma^*\varphi) & \mbox{if} \;\; \gamma\in \Gamma^+\\
(\gamma^*\tau(E),\gamma^*\tau(\varphi)) & \mbox{if} \;\; \gamma\notin \Gamma^+.
\end{cases}
$$
We have the following.

\begin{theorem}\label{fixed-points-theorem}
 Let $Z'\subset Z\cap H$ and  assume that $H$ is invariant by $\tau$. Let
$\widetilde{\cM}(X,G,\Gamma,\tau,c,\sigma)$
be the image of
the  morphism in Proposition \ref{forgetful}. Then

(1) If $c$ and $c'$ are
cohomologous cocycles in $Z^2_\tau(\Gamma,Z')$ 
$$
\widetilde{\cM}(X,G,\Gamma, \tau, c,\sigma)= \widetilde{\cM}(X,G,\Gamma,\tau, c',\sigma'). 
$$

(2) For any $Z'\subset Z$, any $\sigma$,  and any cocycle $c\in Z^2_\tau(\Gamma,Z')$
$$
\widetilde{\cM}(X,G,\Gamma, \tau,c,\sigma)\subset \cM(X,G)^\Gamma.
$$

(3) Let $\cM_*(X,G)\subset \cM(X,G)$ be  the subvariety of $G$-Higgs
bundles which are stable and simple and let $Z'=Z\cap\ker \iota$, then 
$$
\cM(X,G)_*^\Gamma\subset \bigcup_{[c]\in H^2_\tau(\Gamma,Z'), 
\sigma: [\sigma_i]\in R(\Gamma_{x_i},H^\C/Z')} 
\widetilde{\cM}(X,G,\Gamma,\tau,c,\sigma).
$$
\end{theorem}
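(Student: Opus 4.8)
The plan is to prove the three statements by unwinding the definitions and using the structural results already established, in particular Proposition \ref{fix-simple}, Lemma \ref{action}, Proposition \ref{pseudoreps}, and Theorem \ref{theo:hk-twisted-pairs-equivariant}.

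For part (1): Given cohomologous cocycles $c, c' \in Z^2(\Gamma, Z')$, the discussion around equation \eqref{coboundary} shows that a $(\Gamma,c)$-pseudoequivariant structure on $E$ and a $(\Gamma,c')$-pseudoequivariant structure on $E$ differ by a function $f : \Gamma \to Z'$ with $\sigma' = f\sigma$. First I would observe that since $Z' \subset H$ and the reductions to parabolics are only required to be $\Gamma$-equivariant (and $\Gamma$ acts on the space of such reductions via Lemma \ref{action}, with the $Z'$-twist cancelling exactly as in that proof), the set of $\Gamma$-equivariant reductions — and hence the degree inequalities defining polystability — depends only on the induced $\Gamma$-equivariant structure on $E' = E/Z'$, which is unchanged under $c \rightsquigarrow c'$. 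So $(E,\varphi)$ is polystable as a $(\Gamma,c)$-pseudoequivariant Higgs bundle if and only if it is polystable as a $(\Gamma,c')$-pseudoequivariant Higgs bundle. Moreover, by Proposition \ref{change-in-fibre}(1) the local classes $\sigma_i \in R_c(\Gamma_{x_i}, H^\C)$ and $\sigma_i' \in R_{c'}(\Gamma_{x_i}, H^\C)$ correspond under $f|_{\Gamma_{x_i}}$, which is the meaning of the notation $\sigma'$ in the statement. Passing to the image under the forgetful map of Proposition \ref{forgetful} then gives $\widetilde{\cM}(X,G,\Gamma,c,\sigma) = \widetilde{\cM}(X,G,\Gamma,c',\sigma')$.

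For part (2): Let $(E,\varphi)$ be a polystable $(\Gamma,c)$-pseudoequivariant $G$-Higgs bundle, and let $(E_0, \varphi_0)$ denote its image in $\cM(X,G)$ under Proposition \ref{forgetful}. For each $\gamma \in \Gamma$, the bundle map $\widetilde{\gamma} : E \to E_0$ covering $\gamma : X \to X$ is an isomorphism $\gamma^* E_0 \xrightarrow{\cong} E_0$ of $H^\C$-bundles, and the commuting square defining the pseudoequivariant structure on $(E,\varphi)$ says exactly that this isomorphism carries $\gamma^* \varphi_0$ to $\varphi_0$. Hence $\gamma \cdot [(E_0,\varphi_0)] = [(\gamma^* E_0, \gamma^* \varphi_0)] = [(E_0,\varphi_0)]$ in $\cM(X,G)$ for every $\gamma$, so $[(E_0,\varphi_0)] \in \cM(X,G)^\Gamma$. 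Note this direction uses only that the pseudoequivariant structure exists, not polystability as an equivariant object beyond what is needed for the point to lie in $\cM(X,G)$, and it works for any $Z' \subset Z$.

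For part (3): This is the substantive converse and will be the main obstacle. Start with $[(E,\varphi)] \in \cM_*(X,G)^\Gamma$, so $(E,\varphi)$ is stable and simple with $E \cong \gamma^* E$ for all $\gamma \in \Gamma$ (as the action fixes its class) and $\Aut(E,\varphi) = Z(H^\C) \cap \ker \iota$. The strategy is: (i) Build $\Aut_\Gamma(E,\varphi)$, the group of Higgs-bundle automorphisms of $(E,\varphi)$ covering elements of $\Gamma$; stability plus the fixed-point condition give a surjection $\Aut_\Gamma(E,\varphi) \to \Gamma$ with kernel $\Aut(E,\varphi) = Z \cap \ker\iota =: Z'$, exactly as in Proposition \ref{fix-simple} but now in the Higgs-bundle category — one must check that an isomorphism $\gamma^* E \cong E$ can be chosen to intertwine the Higgs fields, which follows because $\gamma^*\varphi$ and $\varphi$ both make $(E,\cdot)$ stable with the same underlying bundle and stable Higgs bundles have no deformations of the Higgs field fixing $E$ other than those induced by $\Aut(E)$. (ii) The resulting extension $1 \to Z' \to \Aut_\Gamma(E,\varphi) \to \Gamma \to 1$ defines a class $[c] \in H^2(\Gamma, Z')$ and a $(\Gamma,c)$-pseudoequivariant structure on $(E,\varphi)$; since $Z' = Z \cap \ker\iota \subset H$ (using that $Z \cap \ker\iota$ lies in the maximal compact, or more carefully replacing it by its image in a suitable real form — this compactness point needs care), Proposition \ref{forgetful} applies. (iii) Polystability of $(E,\varphi)$ as a plain Higgs bundle implies polystability as a $(\Gamma,c)$-pseudoequivariant one: every $\Gamma$-equivariant parabolic reduction is in particular a parabolic reduction, so the required degree inequalities are inherited, and the polystability refinement for equivariant objects follows because a stable $G$-Higgs bundle is automatically polystable as an equivariant object (no degree-zero reductions occur for $s \in i\lieh_{\ad}$ by stability). (iv) The local data: the pseudoequivariant structure determines $\sigma_i \in R_c(\Gamma_{x_i}, H^\C)$ by Proposition \ref{pseudoreps}, with image $[\sigma] \in R(\Gamma_x, H^\C/Z')$; taking the union over all $[c]$ and all $[\sigma]$ then exhibits $[(E,\varphi)]$ in the right-hand side. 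The hard part will be step (i)–(ii): verifying that the automorphism covering $\gamma$ can be promoted to a Higgs-bundle automorphism (using stability to pin down $\gamma^*\varphi$ versus $\varphi$) and that $Z' = Z\cap\ker\iota$ is genuinely contained in $H$ so that Theorem \ref{theo:hk-twisted-pairs-equivariant} and Proposition \ref{forgetful} are available.
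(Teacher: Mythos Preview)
Your approach is essentially the same as the paper's: part (1) via the coboundary function $f$, part (2) directly from the definition, and part (3) by building the extension $1 \to \Aut(E,\varphi) \to \Aut_\Gamma(E,\varphi) \to \Gamma \to 1$ exactly as in Proposition~\ref{fix-simple}, then reading off the cocycle.

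One point in your step (3)(i) should be simplified, because the justification you give is shaky. You write that one must ``check that an isomorphism $\gamma^* E \cong E$ can be chosen to intertwine the Higgs fields,'' and justify this by claiming that ``stable Higgs bundles have no deformations of the Higgs field fixing $E$ other than those induced by $\Aut(E)$.'' That last assertion is false in general (think of a generic fibre of the Hitchin map: many non-isomorphic stable Higgs bundles share the same underlying $E$). Fortunately the whole detour is unnecessary: the hypothesis is that $[(E,\varphi)]$ is fixed in $\cM(X,G)$, which by definition of the $\Gamma$-action means $[(\gamma^*E,\gamma^*\varphi)] = [(E,\varphi)]$, i.e.\ there already exists an isomorphism of \emph{Higgs pairs} $(\gamma^*E,\gamma^*\varphi)\cong(E,\varphi)$. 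You never need to start from a bare bundle isomorphism and promote it. The paper simply takes this for granted.

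Your added remarks --- that equivariant polystability for a stable simple $(E,\varphi)$ is automatic because equivariant reductions are in particular reductions, and that one should verify $Z\cap\ker\iota\subset H$ for Proposition~\ref{forgetful} to apply --- are reasonable points of care that the paper's proof passes over in silence.
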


\begin{proof}
To prove (1), we consider the function $f:G\to Z'$ such that $c$ and $c'$
are related by (\ref{coboundary}). This function defines an automorphism
of a $G$-Higgs bundle $(E,\varphi)$ which sends the twisted equivariant 
structure with cocycle $c$ and isotropy $\sigma$ to a twisted equivariant 
structure with cocycle $c'$ and isotropy $\sigma'$. The proof of (2) follows 
immediately 
from the definition of twisted equivariant structure. The proof of (3) follows
a similar argument to that of Proposition \ref{fix-simple}:
The condition $(E,\varphi)\cong (\gamma^*E,\gamma^*\varphi)$  
if $\gamma\in \Gamma^+$ or  $(E,\varphi)\cong (\gamma^*\tau(E),\gamma^*
\tau(\varphi)$   if $\gamma\in \Gamma^-$  
implies the existence of 
an exact sequence 
$$
1\to \Aut(E,\varphi)\to \Aut_{\Gamma,\tau}(E,\varphi) \to \Gamma\to 1, 
$$
where  $\Aut(E,\varphi)$ is the group of automorphisms of $(\Aut(E,\varphi)$
covering the identity and and  $\Aut_{\Gamma,\tau}(E,\varphi)$ 
is the subgroup of $\Aut_{\Gamma,\tau}(E)$ defined by \ref{twisted-autos}
defined by elements which send $\varphi$ to $\varphi$ if $\gamma\in \Gamma^+$ and $\varphi$ to $\tau(\varphi)$ if $\gamma\in \Gamma^-$.

Since we are assuming that $(E,\varphi)$ is simple 
$\Aut(E,\varphi)\cong Z'=Z\cap \ker\iota$ and hence we have an extension
$$
1\to Z'\to \Aut_{\Gamma,\tau}(E,\varphi) \to \Gamma\to 1. 
$$
This extension defines a cocycle $c\in Z^2_\tau(\Gamma,Z')$,  and a 
$c$-twisted homomorphism $\Gamma\to \Aut_\Gamma(E,\varphi)$ 
with cocycle $c$, i.e., a $(\Gamma,\tau,c)$-equivariant
structure on $(E,\varphi)$. It follows from (1)  that the 
union should run over  $[c]\in H^2_\tau(\Gamma,Z')$ and  
$[\sigma_i]\in R(\Gamma_{x_i},H^\C/Z')$, where, recall that  $[\sigma_i]$ is the 
projection of $\sigma_i$ in $R(\Gamma_{x_i},H^\C/Z')$.
\end{proof}

\section{Equivariant structures and parabolic Higgs bundles}

As in the previous section, let $X$ be a compact Riemann surface, let 
$\Gamma \subset \Aut(X)$ be a finite subgroup.
We will assume here  that $\Gamma=\Gamma^+$.
let $Y:=X/\Gamma$ and $\pi_Y:X\to Y$ be the associated ramified covering map.  The set of points 
$\PPP\subset X$ maps by $\pi_Y$ to a set $\SSS \subset Y$. 
In this section we establish a correspondence between $\Gamma$-equivariant
$G$-Higgs bundles over $X$ and parabolic $G$-Higgs bundles over $Y$ with parabolic
points $\SSS$. This extends the well-known correspondences for vector bundles
\cite{mehta-seshadri,furuta-steer,nasatyr-steer,biswas,andersen-masbaum,
andersen-grove}, and principal bundles \cite{teleman-woodward,balaji-seshadri}. 
In particular this implies that if $Z'=Z\cap\ker\iota$ and a $G$-Higgs bundle
$(E,\varphi)$ is equipped with a $(\Gamma,c)$-equivariant structure with 
$c\in Z^2(\Gamma,Z')$, then  $(E',\varphi)$  with $E':=E/Z'$ is a $G'=G/Z'$-Higgs
bundle with a $\Gamma$-equivariant structure and hence is in correspondence
with a parabolic $G'$-Higgs bundle over $Y$.   
It would be very interesting to give a parabolic description of the 
twisted equivariant structure on $(E,\varphi)$.

\subsection{Parabolic $G$-Higgs bundles}

In this section $Y$ is a compact Riemann surface,
and $G$  is a connected real  reductive Lie group.
We keep the same notation as in the previous sections
for a maximal compact subgroup, isotropy representation, etc.

Let $T\subset H$ be a Cartan subgroup, and $\liet$ be its Lie
algebra. We consider a Weyl alcove $\AAA\subset \liet$
(see \cite{biquard-garcia-prada-mundet}). Recall that if $W$ is the Weyl group we have
$$
\AAA \cong T/W\cong \Conj(H),
$$
where $\Conj(H)$ is the set of conjugacy classes of $H$.
Note that in contrast to the definition of alcove in \cite{biquard-garcia-prada-mundet},
here $\AAA$ may contain some walls so that it is a fundamental
domain for the action of the affine Weyl group.

Let $\SSS=\{y_1, \ldots,  y_s\}$ be a finite set of distinct points
of $Y$ and $D=y_1+\cdots + y_s$ be the corresponding effective
divisor.

An element $\alpha\in \sqrt{-1}\AAA$ defines a parabolic subgroup
of $P_\alpha\subset H^\C$ given by (\ref{parabolic}). Fix for every point
$y_i\in \SSS$ an element $\alpha_i\in  \sqrt{-1}\AAA$, and denote
$\alpha=(\alpha_1,\cdots,\alpha_s)$.

A {\bf parabolic $G$-Higgs bundle over $(Y,\SSS)$ with weights $\alpha$} 
is a pair $(E,\varphi)$ consisting of a holomorphic $H^\C$-bundle $E$ over $Y$
equipped with a reduction of $E_{y_i}$ to $P_{\alpha_i}$ and $\varphi$ is a 
holomorphic section of $PE(\liem^\C)\otimes K(D))$, where $PE(\liem^\C)$ is 
the sheaf of parabolic sections of $E(\liem^\C)$  (see \cite{biquard-garcia-prada-mundet} for details). There are notions of 
stability, semistability and polystability similar to the ones 
we have already seen in previous sections (\cite{biquard-garcia-prada-mundet}).

To define a moduli space one has to fix for every point
 $y_i\in \SSS$ the projection $\LLL_i$  of the residue of
$\varphi$ in $\liem_{\alpha_i}^0/L_{\alpha_i}$, where $\liem_{\alpha_i}^0$ and 
$L_{\alpha_i}$ are defined as in Section \ref{higgs-bundles}.
Denote $\LLL=(\LLL_1,\cdots,\LLL_s)$. 
We define $\cM(Y,\SSS,G,\alpha,\LLL)$ to be the 
{\bf moduli space of parabolic $G$-Higgs bundles on $(Y, \SSS)$ with weights 
$\alpha=(\alpha_1,\cdots,\alpha_s)$ and
residues  $\LLL=(\LLL_1,\cdots,\LLL_s)$}.
 
\subsection{$\Gamma$-equivariant Higgs bundles and parabolic Higgs  bundles}
\label{sec:gamma-hol}






In this section we describe the correspondence between parabolic $G$-Higgs 
bundles on $Y$ and $\Gamma$-equivariant $G$-Higgs bundles on $X$. For 
holomorphic vector bundles over a compact Riemann surface, this correspondence originated in \cite{furuta-steer} and was generalised to higher dimensions in \cite{biswas}. The extension to Higgs vector bundles was carried out in \cite{nasatyr-steer}, and for holomorphic principal bundles this correspondence is contained in \cite{teleman-woodward} and \cite{balaji-seshadri}. 

First we begin with the data of a compact Riemann surface $X$ and a finite subgroup $\Gamma \subset \Aut(X)$ consisting entirely of holomorphic automorphisms. Applying the smoothing process of \cite[Sec. 2]{boden} to the orbifold $X / \Gamma$ determines a compact Riemann surface $Y$ and a holomorphic map $\pi_Y : X \rightarrow Y$ such that $\Gamma$ is the group of deck transformations of the ramified cover $\pi$. Let $\{ x_1, \ldots, x_r \}$ denote the ramification points of $\pi$ and let $D = y_1 + \cdots + y_s$ denote the branch divisor. Each ramification point $x_j$ has a non-trivial isotropy group denoted $\Gamma_{x_j} \subset \Gamma$ which is cyclic of order $N_j$. Let $N = | \Gamma |$ denote the order of the ramified cover $\pi_Y : X \rightarrow Y$. 

Let $E \rightarrow X$ be a principal $H^\C$ bundle, and choose a lift of $\Gamma$ to the group of $C^\infty$ automorphisms of $E$. Via this lift, each isotropy group $\Gamma_{x_j} \cong \mathbb{Z}_{N_j}$ acts on the fibre $E_{x_j}$ which determines a representation $\sigma_j \in R(\Gamma_{x_j}, H^\C)$ (note that since we are considering equivariant rather than twisted equivariant bundles then the cocycle $c \in Z^2(\Gamma, Z')$ is trivial). 

Let $\CCC_{x_j} \in \Conj(H)$ denote the conjugacy class of the generator $\gamma_{x_j}$ of $\Gamma_{x_j}$, which is determined by the representation $\sigma_j$. Under the bijection between $\Conj(H)$ and a Weyl alcove $\AAA$ of $H$ (see \cite{biquard-garcia-prada-mundet}) we thus have that each conjugacy class $\CCC_{x_j}$ corresponds to a weight $\alpha_j \in \sqrt{-1} \AAA$. Since $|\Gamma_{x_j}| = N_j$ then $e^{2 \pi i N_j  \alpha_j} = \id \in H^\C$. In the following we will always choose the weights $\alpha_j$ in the interior of the Weyl alcove $\sqrt{-1} \AAA$.

Given a branch point $y \in Y$ and two points $x, x' \in \pi^{-1}(y)$, there is a deck transformation $\gamma \in \Gamma$ such that $x' = \gamma \cdot x$, and the lift of $\gamma$ to the group of automorphisms of $E$ determines a map on the fibres $\gamma : E_x \rightarrow E_{x'}$. Moreover, the isotropy groups are conjugate $\Gamma_{x'} = \gamma \Gamma_x \gamma^{-1}$ and so the conjugacy classes $\CCC_{x}$ and $\CCC_{x'}$ are equal, and hence so are the weights in $\sqrt{-1} \AAA$ associated to these classes.

Now consider a $\Gamma$-equivariant Higgs structure on $E$, i.e. a holomorphic structure on $E$ together with a Higgs field $\phi$ such that $(E, \phi)$ is preserved by the action of $\Gamma$. For each ramification point $x_j$, choose a small neighbourhood $U_j$ such that the bundle is trivial $\left. E \right|_{U_j} \cong U_j \times H^\C$ and the $\Gamma$-action is trivial 
\begin{equation}\label{eqn:local-trivial-action}
e^{\frac{2\pi i}{N_j}} \cdot (z, g) = (e^{\frac{2\pi i}{N_j}} z, e^{2 \pi i \alpha_j} \cdot g)
\end{equation}
(as explained in \cite{teleman-woodward}, the existence of this trivialisation follows from the equivariant Oka principle of \cite{heinzner-kutschebauch}). We now show that after gauging by $z^{-N_j \alpha_j}$ on each trivialisation for $j = 1, \ldots, r$ then the Higgs pair $(E, \phi)$ descends to a parabolic Higgs bundle on the quotient $(X \setminus \PPP) / \Gamma$, where the weight at the branch point $\pi(x_j)$ is $\alpha_j$. This is known for holomorphic vector bundles (cf. \cite{furuta-steer}, \cite{biswas}) and holomorphic principal bundles (cf. \cite{teleman-woodward}, \cite{balaji-seshadri}), and so to describe the correspondence for Higgs bundles it only remains to describe the residue of the Higgs field at each branch point in $Y$, which is a local computation on each neighbourhood $U_j$. This was worked out for Higgs vector bundles in \cite{nasatyr-steer}, however this has not appeared in the literature for general $G$-Higgs  bundles and so we include the details below.

Locally, the Higgs field on $E$ has the form $\phi(z) = f(z) dz$, where $f(z) : U_j \rightarrow \mathfrak{m}^\C$ is holomorphic. The action of $\Ad_{e^{2 \pi i \alpha_j}}$ decomposes $\mathfrak{m}^\C$ into eigenspaces
\begin{equation*}
\mathfrak{m}^\C = \bigoplus_{\beta} \mathfrak{m}_\beta^\C
\end{equation*}
where $\mathfrak{m}_\beta^\C$ denotes the eigenspace with eigenvalue $e^{2 \pi i \beta}$. Note that each $N_j \beta$ is an integer since $e^{2 \pi i N_j \alpha_j} = \id$, and since $\alpha_j$ is in the interior of the Weyl alcove then each eigenvalue is strictly less than one. Let $f = \bigoplus_\beta f_\beta$ be the corresponding decomposition of $f$. Since each $f_\beta$ is holomorphic then we can write it as a power series
\begin{equation*}
f_\beta(z) = \sum_{k=0}^\infty a_k^\beta z^k
\end{equation*}
with $a_k^\beta$ taking values in $\mathfrak{m}_\beta^\C$. The induced action of $e^{\frac{2\pi i}{N_j}}$ on $\phi$ is given by
\begin{equation*}
e^{\frac{2\pi i}{N_j}} \cdot \phi(z) = \Ad_{e^{2 \pi i \alpha_j}} \left( f \left( e^{\frac{2\pi i}{N_j}} z \right) \right) e^{\frac{2\pi i}{N_j}} dz .
\end{equation*}
Therefore, the action on the component $\phi_\beta = f_\beta dz$ is
\begin{equation*}
e^{\frac{2\pi i}{N_j}} \cdot f_\beta(z) dz = e^{2 \pi i \beta} \sum_{k=0}^\infty a_k^\beta e^{\frac{2 \pi i k}{N_j}} z^k \, e^{\frac{2\pi i}{N_j}} dz = \sum_{k=0}^\infty a_k^\beta e^{\frac{2\pi i(k+1)}{N_j}} e^{2 \pi i \beta} z^k \, dz .
\end{equation*}
If $\phi$ is invariant under the action of $\mathbb{Z}_{N_j} \cong \Gamma_{x_j}$ then we see that $a_k^\beta \neq 0$ implies that $k = N_j \ell - N_j \beta - 1$ for some $\ell \in \mathbb{Z}$.  Therefore
\begin{equation*}
f_\beta(z) dz = \begin{cases} z^{-N_j \beta} \sum_{\ell=0}^\infty a_{N_j \ell-N_j \beta - 1}^\beta z^{N_j \ell} \, z^{-1} dz & \text{if $\beta < 0$} \\ z^{-N_j \beta} \sum_{\ell=1}^\infty a_{N_j \ell-N_j \beta - 1}^\beta z^{N_j \ell} \, z^{-1} dz & \text{if $0 \leq \beta < 1$} \end{cases}
\end{equation*}
where the two distinct cases come from the requirement that $f_\beta$ is holomorphic and hence the power series has non-negative powers of $z$. To simplify the notation, we will use $b_\ell^\beta  = a_{N_j \ell - N_j \beta - 1}^\beta$ in the sequel. On the punctured disk $U_j \setminus \{0\}$, apply the meromorphic gauge transformation $g(z) = z^{N_j \alpha_j} = e^{N_j \alpha_j \log z}$ (note that this is well-defined on the punctured neighbourhood $U_j \setminus \{x_j \}$ since $e^{2\pi i N_j \alpha_j} = \id$). We have $g(z) \cdot \phi(z) = \sum_\beta g(z) \cdot f_\beta(z) dz$ where
\begin{equation*}
g(z) \cdot f_\beta(z) dz = \begin{cases} \sum_{\ell=0}^\infty b_\ell^\beta z^{N_j \ell} \, z^{-1} dz & \text{if $\beta < 0$} \\ \sum_{\ell=0}^\infty b_{\ell+1}^\beta z^{N_j \ell} \, z^{N_j-1} dz & \text{if $0 \leq \beta < 1$} \end{cases}
\end{equation*}
Therefore, after applying the meromorphic gauge transformation $g(z)$, the residue of $g(z) \cdot f_\beta(z)$ is zero if $\beta \geq 0$ and equal to $b_0^\beta$ if $\beta < 0$. Now let $V = \pi(U_j) \subset Y$ and note that \eqref{eqn:local-trivial-action} implies that $\pi : U_j \rightarrow V$ is given by $z \mapsto z^{N_j}$. Then $w = z^{N_j}$ satisfies $w^{-1} dw = N_j z^{-1} dz$ and so $g(z) \cdot f_\beta(z)$ can be written as a function of $w$, i.e. it descends to the quotient $(U_j \setminus \{x_j\}) / \Gamma_{x_j}$
\begin{equation*}
g(z) \cdot f_\beta(z) = f_\beta'(w) = \begin{cases} \sum_{\ell=0}^\infty b_\ell^\beta w^{\ell} \, \frac{1}{N_j} w^{-1} dw & \text{if $\beta < 0$} \\ \sum_{\ell=0}^\infty b_{\ell+1}^\beta w^{\ell} \, \frac{1}{N_j} dw & \text{if $0 \leq \beta < 1$} \end{cases}
\end{equation*}
Therefore the $\Gamma$-invariant Higgs bundle $(E, \phi)$ on $X$ defines a parabolic Higgs bundle $(E', \phi')$ on $Y$ with Higgs field $\varphi' \in \Gamma \left( PE'(\mathfrak{m}^\C) \otimes K(D) \right)$. In particular, the residue of the Higgs field $\varphi'(w) = f'(w) dw$ is $\bigoplus_{\beta < 0} b_0^\beta$ which is nilpotent and so the projection to $\liem_{\alpha_i}^0/L_{\alpha_j}$ is zero. 

Therefore the $\Gamma$-equivariant Higgs bundle $(E, \phi)$ on $X$ with isotropy representations $\sigma$ corresponding to weights $\alpha_j \in \sqrt{-1}\mathcal{A}$ in the interior of the Weyl alcove determines a parabolic Higgs bundle $(E', \phi')$ on $Y$ with parabolic points $\{ y_1, \ldots, y_s \} = \pi (\{ x_1, \ldots, x_r \})$, conjugacy classes $\CCC_{\pi(x_j)}' = \CCC_{x_j}$ determined by $\alpha_j$ and a parabolic Higgs field with nilpotent residues. Moreover, gauge equivalent $\Gamma$-equivariant Higgs bundles on $X$ descend to parabolic gauge-equivalent parabolic Higgs bundles on $Y$.

Conversely, given a parabolic $G$-Higgs bundle $(E', \phi')$ on $Y$ with nilpotent residues at each parabolic point, as above let $V$ be a neighbourhood of a branch point $y$ such that the bundle is trivial over $V \setminus \{y\}$ with weight $\alpha_j \in \sqrt{-1} \mathcal{A}$ such that $e^{2\pi i N_j \alpha_j} = \id$. Since the residues are nilpotent then the Higgs field $\varphi' \in \Gamma \left( PE'(\mathfrak{m}^\C) \otimes K(D) \right)$ has the form
\begin{equation*}
f_\beta'(w) = \begin{cases} \sum_{\ell=0}^\infty c_{\ell}^\beta w^{\ell} \, w^{-1} dw & \text{if $\beta < 0$} \\ \sum_{\ell=0}^\infty c_{\ell}^\beta w^{\ell} \,  dw & \text{if $0 \leq \beta < 1$} \end{cases}
\end{equation*}
After pulling back by the ramified covering map $z \mapsto z^{N_j} = w$, the Higgs field $\phi(z) = f(z) dz$ upstairs has the form
\begin{equation*}
f_\beta(z) = \begin{cases} \sum_{\ell=0}^\infty c_{\ell}^\beta z^{N_j \ell} \, N_j z^{-1} dz & \text{if $\beta < 0$} \\ \sum_{\ell=0}^\infty c_{\ell}^\beta z^{N_j \ell} \, N_j z^{N_j-1} dz & \text{if $0 \leq \beta < 1$} \end{cases}
\end{equation*}
Applying the gauge transformation $g(z) = z^{-N_j \alpha_j}$ (once again, $e^{2\pi i N_j \alpha_j} = \id$ implies that this is well-defined on the punctured neighbourhood $U_j \setminus \{x_j\}$) gives us
\begin{equation*}
g(z) \cdot f_\beta(z) = \begin{cases} z^{-N_j \beta} \sum_{\ell=0}^\infty c_{\ell}^\beta z^{N_j \ell} \, N_j z^{-1} dz & \text{if $\beta < 0$} \\ z^{-N_j \beta} \sum_{\ell=0}^\infty c_{\ell}^\beta z^{N_j \ell} \, N_j z^{N_j-1} dz & \text{if $0 \leq \beta < 1$} \end{cases}
\end{equation*}
and the same argument as before shows that this is holomorphic and invariant under the action of $\mathbb{Z}_{N_j}$ determined by $\alpha_j \in \sqrt{-1} \mathcal{A}$. Therefore the parabolic Higgs bundle on $Y$ determines a $\Gamma$-equivariant Higgs bundle on $X$.

Now that we have established the correspondence, it only remains to show that the notions of stability,  semistability and polystability 
 are also in correspondence. In the case of holomorphic principal bundles, the results of \cite[Sec. 2.2]{teleman-woodward} show that, via the correspondence described above, a stable $\Gamma$-equivariant bundle upstairs on $X$ corresponds to a stable parabolic bundle on $Y$. 
Moreover, the degree of any parabolic reduction of structure group on $E \rightarrow X$ is related to the parabolic degree of a parabolic reduction of structure group on $E' \rightarrow Y$ by a factor of $\frac{1}{|\Gamma|}$.

For Higgs bundles, the only modification is to restrict to reductions of structure group which are compatible with the Higgs field as described in \cite[Sec. 3.2]{biquard-garcia-prada-mundet}. For the Higgs bundle $(E, \phi)$ over $X$, given $s \in \sqrt{-1}\mathfrak{h}$ and a $\Gamma$-invariant holomorphic reduction $\eta \in \Omega^0(E(H^\C / P_s))$ such that $\varphi \in H^0(X, E_\eta(\mathfrak{m}_s) \otimes K)$, the $\Gamma$-invariance of the Higgs field $\phi$ implies that the induced reduction of structure group on the parabolic bundle $(E', \phi')$ over $Y \setminus \SSS$ is compatible with the Higgs field, i.e. $\left. \phi' \right|_{Y \setminus \SSS} \in H^0(Y \setminus \SSS, E_\eta'(\mathfrak{m}_s) \otimes K)$. Conversely, a reduction of structure group on the parabolic bundle $(E', \phi')$ over $Y \setminus \SSS$ which is compatible with the Higgs field $\phi'$ lifts to a reduction of $(E, \phi)$ over $X$ compatible with $\phi$. Since the degree on $X$ is related to the parabolic degree on $Y$ by a factor of $\frac{1}{|\Gamma|}$ (cf. \cite[Sec. 2.3]{teleman-woodward}) then the notion of $\Gamma$-equivariant Higgs stability (resp. semistability and polystability) upstairs on $X$ corresponds to the notion of parabolic Higgs stability (resp. semistability and polystability) downstairs on $Y$.


Therefore we have proved the following bijection of moduli spaces.

\begin{theorem}\label{equivariant-parabolic}

The correspondence described above defines a bijection
$$
\cM(X,G,\Gamma, \id, \sigma) \rightarrow  \cM(Y,\SSS, G,\alpha, 0).
$$
\end{theorem}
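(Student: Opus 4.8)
The plan is to organise the explicit construction carried out in the discussion above into two maps, one in each direction, and to check that they are mutually inverse bijections respecting isomorphism classes and all three notions of (semi/poly)stability. Write $\Phi$ for the assignment sending a polystable $\Gamma$-equivariant $G$-Higgs bundle $(E,\phi)$ on $X$ with isotropy data $\sigma$ to the parabolic $G$-Higgs bundle $(E',\phi')$ on $Y$ obtained by trivialising near each ramification point $x_j$ so that the $\Gamma_{x_j}$-action takes the linear form \eqref{eqn:local-trivial-action}, applying the meromorphic gauge transformation $z^{N_j\alpha_j}$ on each such neighbourhood, and descending along $\pi_Y$; and write $\Psi$ for the reverse assignment, which pulls $(E',\phi')$ back along $\pi_Y$ and applies the gauge transformation $z^{-N_j\alpha_j}$ near each $x_j$. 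The analytic core --- that these local recipes glue to global holomorphic objects, that the weights $\alpha_j$ are precisely those attached to the conjugacy classes $\CCC_{x_j}$ of the $\sigma_j$ as explained above, that $\Phi(E,\phi)$ has local Higgs field supported in the eigenspaces $\liem_\beta^\C$ with $\beta<0$ and hence nilpotent residues (so $\Phi$ lands in $\cM(Y,\SSS,G,\alpha,0)$), and conversely that a parabolic $G$-Higgs bundle with vanishing residue datum has local Higgs field of the same shape (so $\Psi$ is defined) --- is exactly the content of the local computations displayed above, which I would simply invoke.

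The next step is to verify $\Psi\circ\Phi=\id$ and $\Phi\circ\Psi=\id$. Away from the branch locus, $\pi_Y\colon X\setminus\PPP\to Y\setminus\SSS$ is an unramified Galois covering with group $\Gamma$, and there $\Phi$ and $\Psi$ specialise to the classical mutually inverse operations of passing to the $\Gamma$-quotient and pulling back, for holomorphic $G$-bundles carrying a Higgs field; so it remains to compare the two compositions on a punctured neighbourhood of each $x_j$. There the two meromorphic gauge transformations are mutually inverse and pull-back along $z\mapsto z^{N_j}$ undoes the descent, so each composition restricts to the identity and agrees with what we already know off the marked points; concretely, the local normal forms for $f_\beta(z)$ and $f'_\beta(w)$ written above are carried into one another. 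The one point needing a remark is that these identifications extend across the marked points: after the twist the relevant bundle data are holomorphic across $x_j$ (resp.\ $y_j$) and the Higgs field has at worst the prescribed simple pole, so the comparison isomorphisms extend uniquely over the puncture. Since, as noted above, $\Gamma$-equivariant isomorphisms on $X$ descend to parabolic isomorphisms on $Y$ and conversely, $\Phi$ and $\Psi$ are well defined on isomorphism classes.

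Finally I would deduce compatibility with stability. For the underlying principal bundles this is \cite[Sec.~2.2]{teleman-woodward}: $\Gamma$-equivariant (semi/poly)stability of $E$ is equivalent to parabolic (semi/poly)stability of $E'$, the parabolic degrees of matching reductions differing by the factor $1/|\Gamma|$ of \cite[Sec.~2.3]{teleman-woodward}. For $G$-Higgs bundles the only addition is that one restricts attention to reductions to $P_s$ compatible with the Higgs field in the sense of \cite[Sec.~3.2]{biquard-garcia-prada-mundet}; $\Gamma$-invariance of $\phi$ furnishes a bijection between $\phi$-compatible $\Gamma$-invariant reductions on $X$ and $\phi'$-compatible parabolic reductions on $Y$, and since the degree identity is untouched by this restriction the three stability notions match under $\Phi$ and $\Psi$. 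Hence $\Phi$ and $\Psi$ descend to mutually inverse maps of moduli sets, which is the asserted bijection.

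The step I expect to be the genuine obstacle is the bookkeeping at the marked points for a general reductive $G$: confirming that vanishing of the residue datum $\LLL=0$ on $Y$ is really equivalent to the local Higgs field having components only in the $\beta<0$ eigenspaces --- so that $\Psi$ is defined and $\Phi$ lands in the $\LLL=0$ stratum rather than a larger one --- and that the polystability-with-$\phi$-compatible-reductions argument survives the passage to the possibly non-reductive parabolics $P_s\subset H^\C$ occurring in the $G$-Higgs setting. Essentially all of this is already carried out in the discussion preceding the statement, so the proof proper amounts to the observation that those local computations glue and are mutually inverse, together with the cited stability correspondence.
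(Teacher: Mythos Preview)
Your proposal is correct and follows essentially the same approach as the paper: the paper's proof \emph{is} the discussion preceding the theorem statement, and you have accurately organised that discussion into the two maps $\Phi$ and $\Psi$, the check that they are mutually inverse via the local power-series computations, and the stability correspondence via \cite{teleman-woodward} together with the Higgs-compatible-reduction refinement from \cite{biquard-garcia-prada-mundet}. The potential subtlety you flag about matching $\LLL=0$ with the $\beta<0$ residue condition is indeed glossed over in the paper in the same way.
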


\subsection{$\tau$-Twisted $\Gamma$-equivariant structures and 
pseudoreal  parabolic Higgs bundles}\label{pseudo-real-parabolic}

In this section we assume that $\Gamma$ contains antiholomorphic automorphisms 
$\Gamma$ contains antiholomorphic automorphisms 
of $X$, that is,  $\Gamma$ is given by an extension
$$
1\to \Gamma^+\to \Gamma \to \Z/2\to 1
$$
defined by (\ref{modular-extension}).
We also assume that the $(\Gamma,\tau,c)$-equivariant structures on
the $G$-Higgs bundles over $X$ are  such that the restriction of the cocycle 
$c$  to $\Gamma^+$ is trivial.  In this situation $c$ defines a cocycle 
$\widetilde{c}\in Z^2_\tau(\Z/2, Z')$ where the action of 
$\Z/2=\Gamma/\Gamma^+$ is the one  induced by (\ref{action-on-Z}).
The cocycle $\widetilde{c}$ defines a pseudoreal structure on the parabolic 
$G$-Higgs bundle on $Y:=X/\Gamma^+$ constructed in the previous section.
Pseudoreal structures of parabolic $G$-Higgs bundles are studied in \cite{calvo-garcia-prada-perez},
generalising the theory of pseudoreal $G$-Higgs bundles is well-understood 
\cite{biswas-garcia-prada-hurtubise,biswas-garcia-prada,biswas-calvo-garcia-prada}.
One thus has a correspondence similar to the one in Theorem \ref{equivariant-parabolic}
also in this situation.




\section{Twisted equivariant structures and representations}

In this section, $S$ is an oriented smooth compact surface of genus $g\geq 2$,
$X$ is a Riemann surface, whose  underlying smooth surface is  $S$. The Lie 
group $G$ is a real form of a complex semisimple Lie group $G^\C$, 
and $\tau$ is a 
conjugation of $G^\C$ defining a compact real form of $G^\C$, and preserving $G$. Finally, $\Gamma$ is a subgroup of $\Aut(X)$.

\subsection{Twisted equivariant Higgs bundles and the orbifold fundamental group}

Exploiting Proposition \ref{action-correspondence} and Theorem 
\ref{fixed-points-theorem}, we will give an interpretation of 
 the fix-point locus $\calR(S,G)^\Gamma$ in terms of representations of the $\Gamma$-orbifold 
fundamental group $\pi_1(S,\Gamma)$
of $S$ (see \cite{biswas-garcia-prada-hurtubise}, for example, for a 
definition). This group fits into a short exact sequence 
$$
1\to \pi_1(S) \to \pi_1(S,\Gamma)\to  \Gamma\to 1.
$$

Let $c\in Z^2_\tau(\Gamma,Z)$ be a $2$-cocycle, where $Z$ is the centre of $G$.
Recall that $\Gamma$ acts on $G$ as
$ g^\gamma=g$ if $\gamma\in \Gamma^+$  and  
$g^\gamma=\tau(g)$ if $\gamma\in \Gamma^-$, for $g\in G$, 
inducing  an action on $Z$. Let also 
$$
 \tau^\gamma= \begin{cases}
\Id&\mbox{if}\;\;  \gamma\in \Gamma^+\\
\tau &\mbox{if}\;\; \gamma\in \Gamma^-.
\end{cases}
$$

We consider a group $\widehat{G}=\widehat{G}(\Gamma,\tau,c)$,
whose set is $G\times \Gamma$, and the  group structure is defined by
$$
(g_1,\gamma_1)\cdot (g_2,\gamma_2)= (g_1\tau^{\gamma_1}(g_2)c(\gamma_1,\gamma_2),\gamma_1\gamma_2),
$$
for $g_1,g_2\in G$ and $\gamma_1,\gamma_2\in\Gamma$.

Let  $\calR(S,G,\Gamma,\tau,c)$ be the set of $G$-conjugacy classes  of 
 representations 
$\widehat{\rho}:\pi_1(S,\Gamma)\to \widehat{G}$, which extend reductive   
representations $\rho:\pi_1(S)\to G$,  making the following diagram commutative

\begin{displaymath}
  \begin{CD}
  0@>>>\pi_1(S) @>>>\pi_1(S,\Gamma)@>>> \Gamma@>>>1\\
  @.@V \rho VV@V \widehat{\rho} VV@V \Id VV\\
  1@>>> G @>>>\widehat{G}@>>>\Gamma@>>>1.
  \end{CD}
  \end{displaymath} 

We will denote by  $\widetilde{\calR}(S,G,\Gamma,\tau,c)$ the image of  $\calR(S,G,\Gamma,\tau,c)$ in $\calR(S,G)$, 
for the map given by sending $\widehat{\rho}$ to $\rho$. 

Following the arguments in \cite{biswas-garcia-prada-hurtubise} (see also \cite{biswas-garcia-prada,biswas-calvo-garcia-prada}),
we can extend the non-abelian Hodge correspondence to the twisted equivariant case and prove the following.

\begin{theorem}\label{equivariant-nahc}
Under the non-abelian Hodge correspondence given by Theorem  \ref{na-Hodge} one has the homeomorphism
$$
\calR(S,G,\Gamma,\tau,c) \stackrel{\cong}{\longrightarrow} \cM(X,G,\Gamma,\tau,c).
$$
\end{theorem}

Fixing the elements $\sigma_i$ at the points $x_i\in \PPP$ to define the moduli space   $\cM(X,G,\Gamma,\tau,c,\sigma)$ with 
$\sigma=(\sigma_1,\cdots,\sigma_r)$, corresponds to fixing a cyclic element for 
the image under the representation of a loop around the point $x_i$. The moduli space corresponding to $\cM(X,G,\Gamma,\tau,c,\sigma)$
will be denoted by $\calR(S,G,\Gamma,\tau,c,\sigma)$,
and its image in $\calR(S,G)$ by 
$\widetilde{\calR}(S,G,\Gamma,\tau,c,\sigma)$.

As a corollary of Theorems \ref{fixed-points-theorem}  and 
\ref{equivariant-nahc} we have the following.

\begin{theorem}\label{fixed-points-theorem-rep}
(1) For  any cocycle $c\in Z^2_\tau(\Gamma,Z)$
$$
\widetilde{\calR}(S,G,\Gamma, \tau,c)\subset \calR(S,G)^\Gamma.
$$

(2) Let $\calR_*(S,G)\subset \calR(S,G)$ be  the subvariety of irreducible 
representations, then 
$$
\calR_*(S,G)^\Gamma\subset \bigcup_{[c]\in H^2_\tau(\Gamma,Z)} 
\widetilde{\calR}(S,G,\Gamma,\tau,c).
$$
\end{theorem}
\subsection{The orbifold fundamental group and punctured surfaces}

Assume now, as in Section \ref{sec:gamma-hol}, that $\Gamma=\Gamma^+$ and that the cocycle $c$ is trivial. 
Let $\SSS$ be  the set of points in $S/\Gamma$ corresponding to the points $\PPP\subset S$  (see Section \ref{sec:gamma-hol}).
Then,  combining Theorems \ref{equivariant-parabolic} and \ref{equivariant-nahc}  with the non-abelian Hodge correspondence 
for punctured surfaces,  proved in \cite{biquard-garcia-prada-mundet}, we have the following.

\begin{theorem}\label{equiv-rep-punctures}
There is a bijection between $\calR(S,G,\Gamma,\tau,c=1,\sigma)$ 
and $\calR(S/\Gamma\setminus \SSS,G)$ with conjugacy classes 
around the points in $\SSS$ determined by $\sigma$.
\end{theorem} 

We now assume, as in Section \ref{pseudo-real-parabolic},  that $\Gamma$ contains antiholomorphic automorphisms
and  that the restriction of the cocycle 
$c$  to $\Gamma^+$ is trivial. As mentioned in  Section \ref{pseudo-real-parabolic},  in this situation $c$ defines a cocycle 
$\widetilde{c}\in Z^2_\tau(\Z/2, Z)$ where the action of 
$\Z/2=\Gamma/\Gamma^+$ is the one  induced by (\ref{action-on-Z}). We can then consider the $\Z/2$-orbifold fundamental group
$\pi_1(S/\Gamma^+\setminus \SSS,\Z/2)$ for the residual action of $\Z/2=\Gamma/\Gamma^+$ on $S/\Gamma^+$,
which fits in a short exact sequence 
$$
1\to \pi_1(S/\Gamma^+\setminus\SSS) \to  \pi_1(S/\Gamma^+\setminus\SSS,\Z/2)  \to  \Z/2\to 1.
$$
Here $\SSS$ is the set of points $S/\Gamma^+$ corresponding to the set 
$\PPP\subset S$.

We define the group $\widehat{G}=\widehat{G}(\tau,c)$,
whose set is $G\times \Z/2$, and the  group structure is defined by
$$
(g_1,e_1)\cdot (g_2,e_2)= (g_1\tau^{e_1}(g_2)\widetilde{c}(e_1,e_2),e_1e_2),
$$
for $g_1,g_2\in G$ and $e_1,e_2\in\Z/2$. Here $\tau^{e_1}=\Id$ if $e_1=1$ and  $\tau^{e_1}=\tau$ if 
$e_1=-1$.
Define 
$$
\calR(S/\Gamma^+\setminus \SSS,G,\tau,c)
$$ 
as the set $G$-conjugacy classes of homomorphisms
 $\widehat{\rho}:\pi_1(S/\Gamma^+\setminus \SSS,\Z/2)\to \widehat{G}$ 
extending homomorphisms $\rho:\pi_1(S/\Gamma^+\setminus \SSS)\to G$, with
$\rho$ reductive, and making the 
the following diagram commutative

\begin{displaymath}
  \begin{CD}
  0@>>>\pi_1(S/\Gamma^+\setminus \SSS) @>>>\pi_1(S/\Gamma^+\setminus \SSS,\Z/2)@>>> \Z/2@>>>1\\
  @.@V \rho VV@V \widehat{\rho} VV@V \Id VV\\
  1@>>> G @>>>\widehat{G}@>>>\Z/2@>>>1.
  \end{CD}
  \end{displaymath} 
From  the discussion in Section  \ref{pseudo-real-parabolic} we conclude the following.

\begin{theorem}\label{equiv-rep-punctures-pseudo}
There is a bijection between the moduli spaces  $\calR(S,G,\Gamma,\tau,c,\sigma)$ and $\calR(S/\Gamma^+\setminus \SSS,G,\tau,c)$ with conjugacy classes 
around the points in $\SSS$ determined by $\sigma$.
\end{theorem}

\section*{Acknowledgements}

The first author was partially supported by the Spanish MINECO under the ICMAT Severo Ochoa 
grant No.SEV-2015-0554, and grant No. MTM2013-43963-P and by the European
Commission Marie Curie IRSES  MODULI Programme PIRSES-GA-2013-612534.  The second author was partially supported by Singapore Ministry of Education Academic Research Fund
Tier 1 grant number R-146-000-200-112.

\providecommand{\bysame}{\leavevmode\hbox to3em{\hrulefill}\thinspace}

\end{document}